\documentclass[12pt]{amsart}
\usepackage{latexsym,amsmath,amssymb,epsfig,amsthm}
\usepackage{graphicx}
\usepackage{tikz}
\usetikzlibrary{calc}
\usepackage[enableskew,vcentermath]{youngtab}
\usepackage{hyperref}
\hypersetup{colorlinks=false}
\input epsf
\textwidth=16cm 
\textheight=21cm 
\hoffset=-55pt 
\footskip=25pt
\newtheorem{theorem}{Theorem}[section]
\newtheorem{proposition}[theorem]{Proposition}
\newtheorem{corollary}[theorem]{Corollary}

\newtheorem{example}[theorem]{Example}
\newtheorem{lemma}[theorem]{Lemma}
\newtheorem{remark}[theorem]{Remark}

\newtheorem{question}[theorem]{Question}
\newcommand{\asc}{{\rm asc}}
\newcommand{\Asc}{{\rm Asc}}

\newcommand{\des}{{\rm des}}
\newcommand{\Des}{{\rm Des}}

\newcommand{\NC}{{\rm NC}}

\newcommand{\dD}{{\mathcal D}}

\newcommand{\sS}{{\mathcal S}}
\newcommand{\tT}{{\mathcal T}}
\newcommand{\wW}{{\mathcal W}}
\newcommand{\xX}{{\mathcal X}}
\newcommand{\zZ}{{\mathcal Z}}

\newcommand{\RR}{{\mathbb R}}
\newcommand{\fS}{{\mathfrak S}}
\newcommand{\NN}{{\mathbb N}}
\newcommand{\ZZ}{{\mathbb Z}}
\newcommand{\CC}{{\mathbb C}}
\newcommand{\rzz}{{\mathrm {\mathbf z}}}
\renewcommand{\to}{\rightarrow}

\newcommand{\sm}{{\smallsetminus}}
\begin{document}
\title[Two classes of posets with real-rooted 
chain polynomials]
{Two classes of posets with real-rooted chain 
polynomials}

\author[C.A.~Athanasiadis]{Christos~A.~Athanasiadis}
\address{Department of Mathematics\\
National and Kapodistrian University of Athens\\
Panepistimioupolis\\ 15784 Athens, Greece}
\email{caath@math.uoa.gr}

\author[T.~Douvropoulos]{Theo~Douvropoulos}
\address{Department of Mathematics\\
Brandeis University\\
Waltham, MA 02453, USA}
\email{tdouvropoulos@brandeis.edu}

\author[K.~Kalampogia-Evangelinou]
{Katerina~Kalampogia-Evangelinou}
\address{Department of Mathematics\\
National and Kapodistrian University of Athens\\
Panepistimioupolis\\ 15784 Athens, Greece}
\email{kalampogia@math.uoa.gr}

\date{November 30, 2025}
\thanks{\textit{Mathematics Subject Classifications}: 
05A15, 05E45, 06A07, 26C10}
\thanks{ \textit{Key words and phrases}. 
Chain polynomial, simplicial poset, noncrossing 
partition, rank selection, real-rooted 
polynomial, permutation enumeration, descent.}
\thanks{First and third authors supported by the 
Hellenic Foundation for Research and Innovation 
(H.F.R.I.) under the `2nd Call for H.F.R.I. Research 
Projects to support Faculty Members \& Researchers' 
(Project Number: HFRI-FM20-04537).}

\begin{abstract}
The coefficients of the chain polynomial of a finite 
poset enumerate chains in the poset by their number 
of elements. It has been a challenging open problem 
to determine which posets have real-rooted chain 
polynomials. Two new classes of posets, namely those 
of all rank-selected subposets of Cohen-Macaulay 
simplicial posets and all noncrossing partition 
lattices associated to finite Coxeter groups, are 
shown to have this property. The first result 
generalizes one of Brenti and Welker. As a special 
case, the descent enumerator of permutations of the 
set $\{1, 2,\dots,n\}$ which have ascents at 
specified positions is shown to be real-rooted, hence 
log-concave and unimodal, and a good estimate for the 
location of the peak is deduced.        
\end{abstract}

\maketitle

\section{Introduction}
\label{sec:intro}

The coefficients of the chain polynomial $f_P(x)$
of a finite poset (partially ordered set) $P$ 
enumerate chains in $P$ by their number of elements. 
From a face enumeration point of view, $f_P(x)$ is 
the $f$-polynomial of a flag simplicial complex of 
special type, namely the order complex $\Delta(P)$ 
(see~\cite{StaCCA} for basic definitions and 
terminology on simplicial complexes and their face 
enumeration). Thus, 
\begin{equation}
\label{eq:f-Delta(Q)-def}
f_P(x) = f(\Delta(P),x) = \sum_{i=0}^n f_{i-1}
          (\Delta(P)) x^i,
\end{equation}
where $f_{i-1}(\Delta(P))$ is the number of 
$i$-element chains in $P$ (which are precisely the
$(i-1)$-dimensional faces of $\Delta(P)$) and $n$
is the largest size of such a chain (which is one 
more than the dimension of $\Delta(P)$). For some 
purposes, one may focus on the corresponding
$h$-polynomial
\begin{equation}
\label{eq:h-Delta(Q)-def}
h(\Delta(P),x) = \sum_{i=0}^n f_{i-1}(\Delta(P)) 
                 x^i (1-x)^{n-i}
\end{equation}
instead. For instance, $f_P(x)$ has only real 
roots if and only if so does $h(\Delta(P),x)$. 
The following general question was posed 
in~\cite{AK23}.
\begin{question} \label{que:main} 
{\rm (\cite[Question~1.1]{AK23})}
For which finite posets $P$ does the chain polynomial 
$f_P(x)$ have only real roots?  
\end{question}

There are several motivations behind this question.
For finite distributive lattices, it is known to be 
equivalent to the poset conjecture for natural 
labelings, posed in the seventies by 
Neggers~\cite{Ne78} (see also 
\cite[Conjecture~1]{Sta89}) and finally disproved by 
Stembridge~\cite{Ste07}, after counterexamples to a 
more general conjecture were found by 
Br\"and\'en~\cite{Bra04}. It is open for face 
posets of convex polytopes \cite[Question~1]{BW08}
and more general regular cell complexes 
\cite[Section~5]{AK23}, in which case the chain 
polynomial coincides with the $f$-polynomial of 
the barycentric subdivision of the complex, and 
is conjectured to hold for all geometric 
lattices \cite[Conjecture~1.2]{AK23}. Among other 
positive results, an affirmative answer to 
Question~\ref{que:main} has been given for 
simplicial posets with nonnegative 
$h$-vector~\cite{BW08} and cubical posets with 
nonnegative cubical $h$-vector~\cite{Ath21} (in 
particular, for face lattices of simplicial and 
cubical convex polytopes), for partition and 
subspace lattices~\cite{AK23} and for posets 
which do not contain the disjoint union of a 
three-element chain and a one-element chain as an 
induced subposet~\cite{Sta98}. An overview of 
positive results will appear in~\cite{Ka24+}.

This paper contributes an affirmative answer to 
Question~\ref{que:main} for two other broad classes 
of posets, namely those of all rank-selected 
subposets of Cohen--Macaulay (over some field) 
simplicial posets (more generally, of simplicial 
posets with nonnegative $h$-vector) and all 
noncrossing partition lattices associated to 
finite Coxeter groups. A special case of the first 
class of independent interest comes from 
considering rank-selected subposets of Boolean 
lattices. Given $T \subseteq [n-1] := 
\{1, 2,\dots,n-1\}$ we set 
\begin{equation} \label{eq:AnT-def}
A^T_n(x) = \sum_{w \in \fS_n : \, 
           \Des(w) \subseteq T} x^{\des(w)},
\end{equation}
where $\fS_n$ is the symmetric group of 
permutations of the set $[n]$ and for $w \in \fS_n$, 
$\Des(w) = \{ i \in [n-1]: w(i) > w(i+1)\}$ and 
$\des(w)$ is the set and the number of descents of
$w$, respectively. Thus, $A^T_n(x)$ is the descent 
enumerator for permutations in $\fS_n$ which have 
ascents at specified positions (those in $[n-1] \sm
T$); it reduces to the classical Eulerian polynomial 
$A_n(x)$ \cite[Section~1.4]{StaEC1} for $T = [n-1]$. 
A $q$-analogue of $A^T_n(x)$ was studied 
in~\cite{CGSW07} in the special case $T = [r] 
\subseteq [n-1]$ (see also Example~\ref{ex:T=[r]}). 
To the best of our knowledge, except for 
the important special case of Eulerian polynomials, 
the unimodality, log-concavity and real-rootedness 
of $A^T_n(x)$ have not been considered before.

The following statement is the first main result 
of this paper (basic definitions and terminology 
on posets can be found in Sections~\ref{sec:pre}, 
\ref{sec:simplicial}, \ref{sec:noncrossing}). 
Part (b) generalizes the main result of~\cite{BW08}, 
which corresponds to the special case $T = [n]$. 
Throughout this paper, we denote by $\hat{P}$ the 
poset obtained from $P$ by adding a maximum element 
and set $A^T_n(x) := A^{T \cap [n-1]}_n (x)$ for 
$T \subseteq \NN$. 
\begin{theorem} \label{thm:mainA} 
Let $n$ be a positive integer.
\begin{itemize}
\itemsep=0pt
\item[(a)]
The polynomial $A^T_n(x)$ has only real roots for 
every $T \subseteq [n-1]$.

\item[(b)]
Let $P$ be a Cohen--Macaulay simplicial poset of 
rank $n$. Then, every rank-selected subposet 
$\hat{P}_T$ of $\hat{P}$ has a real-rooted chain 
polynomial. Moreover, $h(\Delta(\hat{P}_T),x)$ is 
interlaced by $A_n^T(x)$ for every $T \subseteq [n]$.

\end{itemize}
\end{theorem}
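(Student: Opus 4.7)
The plan is to prove part~(b) in full, and to deduce part~(a) by applying it to the Boolean lattice $P = B_n$ (viewed as a simplicial poset of rank $n$ with $h$-vector $(1, 0, \dots, 0)$). The main engine will be a universal decomposition of the form
\[
h(\Delta(\hat{P}_T),x) \;=\; \sum_{j=0}^{n} h_j(P) \, Q^T_{n,j}(x),
\]
in which the polynomials $Q^T_{n,j}(x)$ depend only on $n$, $T$, and $j$, together with the statement that $(Q^T_{n,j}(x))_{j=0}^n$ is an interlacing sequence satisfying $Q^T_{n,0}(x) = A^T_n(x)$.

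To derive the decomposition, I would enumerate the chains of $\hat{P}_T$ by their maximum element $\tau \in P$. Since $P$ is simplicial, $[\hat{0}, \tau] \cong B_{\rho(\tau)}$, so the chains of $\hat{P}_T$ with maximum $\tau$ correspond to chains of $B_{\rho(\tau)}$ ending at the top with ranks in $T$. Summing first over the chain type $(r_1 < \cdots < r_{k-1} < \rho(\tau))$ with $r_i \in T$ and then over $\tau$ produces an expression involving $f_{r-1}(P)$ for $r \in T$, weighted by products of binomials depending only on $T$ and $r$. Substituting $f_{r-1}(P) = \sum_j \binom{n-j}{r-j} h_j(P)$ and passing to the $h$-polynomial via $h(\Delta, x) = (1-x)^{|T|} f(\Delta, x/(1-x))$ yields the decomposition with explicit $Q^T_{n,j}(x)$.

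To identify $Q^T_{n,0}(x)$ with $A^T_n(x)$, I specialize to $P = B_n$: the sum collapses to its $j = 0$ term, while the left-hand side $h(\Delta((\hat{B}_n)_T), x)$ is a standard rank-selection computation equal to $A^T_n(x)$ (via a $(P,\omega)$-partition argument, or directly by checking definitions). This simultaneously delivers part~(a), once (b) is established. For $j > 0$, one expects $Q^T_{n,j}(x)$ to admit a combinatorial interpretation as a descent enumerator over permutations $w \in \fS_n$ with $\Des(w) \subseteq T$ subject to an additional condition encoding $j$ (for instance, the position of $n$ in $w$, or a similar shape statistic).

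The technical crux is the interlacing sequence property of $(Q^T_{n,j}(x))_{j=0}^n$, with $Q^T_{n,0}$ interlacing every $Q^T_{n,j}$. Once this is established, a standard lemma on interlacing sequences implies that every nonnegative combination $\sum_j a_j Q^T_{n,j}(x)$ is real-rooted and interlaced by $Q^T_{n,0}(x) = A^T_n(x)$; since $P$ is Cohen--Macaulay we have $h_j(P) \geq 0$, and both conclusions of~(b) follow. For $T = [n]$ the interlacing property amounts to the Brenti--Welker result. For general $T$ I would attempt an induction on $|T|$, relating $Q^T_{n,j}$ to $Q^{T \sm \{i\}}_{n,j'}$ for a suitably chosen $i \in T$ and invoking closure properties of interlacing sequences; an alternative route would be to exhibit a bivariate polynomial $\sum_j y^j Q^T_{n,j}(x)$ that is stable in $(x, y)$ and to deduce the interlacing from a Hermite--Biehler-type criterion.
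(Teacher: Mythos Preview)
Your architecture matches the paper's: the decomposition $h(\Delta(\hat P_T),x)=\sum_{j=0}^n h_j(P)\,Q^T_{n,j}(x)$ with $Q^T_{n,0}=A^T_n$, followed by an interlacing claim for $(Q^T_{n,j})_{j=0}^n$, is exactly what the paper does. The paper's $Q^T_{n,j}$ is $p^{\,n+1-T}_{n,j}(x)$, the descent enumerator over $w\in\fS_{n+1}$ with $w(1)=j+1$ and $\Des(w)\subseteq n+1-T$ (so the statistic encoding $j$ is the \emph{value} of the first letter in $\fS_{n+1}$, not the position of $n$ in $\fS_n$); the decomposition itself comes from Stanley's Exercise~III.15 (the paper's Lemma~\ref{lem:StaCCA}), which your chain-counting sketch would also recover.

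The gap is the interlacing itself, and your two suggested routes are neither the paper's nor fleshed out. The paper argues by induction on $n$, not on $|T|$: deleting the first letter of $w\in\fS_{n+1}$ yields
\[
p^T_{n,k}(x)=\begin{cases}
\displaystyle\sum_{i=k}^{n-1} p^{T-1}_{n-1,i}(x), & 1\notin T,\\[2ex]
\displaystyle x\sum_{i=0}^{k-1} p^{T-1}_{n-1,i}(x)+\sum_{i=k}^{n-1} p^{T-1}_{n-1,i}(x), & 1\in T,
\end{cases}
\]
and parts~(b) and~(c) of Lemma~\ref{lem:interlace-rec} then carry the interlacing from $n-1$ to $n$. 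Induction on $|T|$ does not produce a partial-sum recurrence of this shape in any evident way (adding or removing one element of $T$ does not refine the $j$-index into a combination of lower ones), and the bivariate-stability route remains a hope rather than an argument. Since part~(a) \emph{is} the $j=0$ consequence of this interlacing, your plan to derive (a) from (b) does not sidestep the issue: until the interlacing of $(Q^T_{n,j})_j$ is actually established, neither part follows.
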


Noncrossing partition lattices associated to Coxeter
groups are central objects of study in Coxeter-Catalan
combinatorics; see \cite[Chapter~2]{Arm09} for an 
overview. The enumeration of chains in these posets
has been a very popular topic \cite{AR04, CD22, Ed80, 
JV15, Kim11, Rea08, Rei97}. Let us denote by $\NC_W$ 
the noncrossing partition lattice associated to $W$. 
The second main result of this paper is as follows.
\begin{theorem} \label{thm:mainB} 
Let $W$ be a finite Coxeter group.
\begin{itemize}
\itemsep=0pt
\item[(a)]
The noncrossing partition lattice $\NC_W$ has a 
real-rooted chain polynomial.

\item[(b)]
The polynomial $h(\Delta(\NC_W), x)$ has a 
nonnegative real-rooted symmetric decomposition with 
respect to $r_W-1$ for every irreducible finite 
Coxeter group $W$, where $r_W$ is the rank of $W$. 
In particular, $h(\Delta(\NC_W),x)$ is unimodal, 
with a peak at position $\lfloor r_W/2 \rfloor$. 

\end{itemize}
\end{theorem}

Question~\ref{que:main} cannot have an affirmative 
answer for all Cohen--Macaulay posets since, as 
already explained, it fails for finite distributive 
lattices. However, and since the proper parts of 
face lattices of polytopes, 
geometric lattices, noncrossing partition lattices 
of types $A$ and $B$ \cite{KK13} and rank-selected 
subposets of Boolean lattices are doubly 
Cohen--Macaulay (see \cite[Section~III.3]{StaCCA} 
for information about doubly Cohen--Macaulay posets), 
it seems reasonable to pose the following question.
\begin{question} \label{que:2CM} 
Does the chain polynomial of any doubly 
Cohen--Macaulay lattice (or even doubly 
Cohen--Macaulay poset) have only real roots?
\end{question}

This paper is organized as follows.
Section~\ref{sec:pre} reviews definitions and tools 
from the theory of real-rooted polynomials (and 
especially the theory of interlacing) and the
enumerative combinatorics of posets which are 
essential in understanding the main results and 
their proofs. The proof of Theorem~\ref{thm:mainA}
splits in two sections. Section~\ref{sec:perms} 
proves that $A^T_n(x)$ is real-rooted (see 
Theorem~\ref{thm:perms}), hence unimodal, gives a 
good estimate for the location of the peak and 
discusses some interesting special cases. 
Section~\ref{sec:simplicial} proves part (b) of 
the theorem by combining Theorem~\ref{thm:perms}
with an exercise from \cite{StaCCA} (see 
Lemma~\ref{lem:StaCCA}) and, as an application, 
generalizes part (a) in the setting of colored 
permutations. Part (a) of Theorem~\ref{thm:mainB} 
is proven in Section~\ref{sec:noncrossing} in two 
different ways. The first proof does not assume 
the classification of finite Coxeter groups. The 
second proof is based on explicit combinatorial 
interpretations (which are of independent 
interest), as descent enumerators of certain 
families of words, of the $h$-polynomials of the 
order complexes $\Delta(\NC_W)$ for the irreducible 
finite Coxeter groups $W$ of classical types (see 
Proposition~\ref{prop:noncrossing}) and on computer 
computations for the exceptional groups. These 
combinatorial interpretations are extracted from 
the known explicit formulas for the entries of the 
flag $f$-vectors of noncrossing partition lattices 
\cite{AR04, Ed80, Rei97}, the case of groups of 
type $D$ being the trickiest. Part (b) of 
Theorem~\ref{thm:mainB} follows from 
Proposition~\ref{prop:noncrossing} by an 
application of a result of Jochemko~\cite{Jo21} 
about Veronese operators on formal power series.

\section{Preliminaries}
\label{sec:pre}

This section reviews basic concepts and tools from 
the theory of real-rooted polynomials and the 
enumerative combinatorics of posets (the theory of 
rank selection, in particular) which will be 
essential in the following three sections. Standard 
references for these topics are \cite{Bra15, Fi06,
Sta89, StaCCA, StaEC1}.

\subsection{Polynomials} 
A polynomial $p(x) = h_0 + h_1 x + \cdots + h_n x^n 
\in \RR[x]$ is called
\begin{itemize}
\item[$\bullet$] 
  \emph{symmetric}, with center of symmetry $n/2$, if 
	$h_i = h_{n-i}$ for all $0 \le i \le n$,
\item[$\bullet$] 
  \emph{unimodal}, with a peak at position $k$, if 
	$h_0 \le h_1 \le \cdots \le h_k \ge h_{k+1} \ge 
	\cdots \ge h_n$,
\item[$\bullet$] 
  \emph{log-concave}, if $h^2_i \ge h_{i-1}h_{i+1}$ 
	for $1 \le i \le n-1$,
\item[$\bullet$] 
  \emph{real-rooted}, if every root of $p(x)$ is 
	real, or $p(x) \equiv 0$.
\end{itemize}
Every real-rooted polynomial with nonnegative 
coefficients is log-concave and unimodal; see
\cite{Bra15, Sta89} for more information about these 
concepts. 

A real-rooted polynomial $p(x)$, with 
roots $\alpha_1 \ge \alpha_2 \ge \cdots$, is 
said to \emph{interlace} a real-rooted polynomial 
$q(x)$, with roots $\beta_1 \ge \beta_2 \ge \cdots$, 
if
\[ \cdots \le \alpha_2 \le \beta_2 \le \alpha_1 \le
   \beta_1. \]
We then write $p(x) \preceq q(x)$. By convention, 
the zero polynomial interlaces and is interlaced by 
every real-rooted polynomial and nonzero constant 
polynomials interlace all polynomials 
of degree at most one. A sequence 
$(p_0(x), p_1(x),\dots,p_m(x))$ of real-rooted 
polynomials is called \emph{interlacing} if 
$p_i(x) \preceq p_j(x)$ for $0 \le i < j \le 
m$. The following statement lists well known 
properties of interlacing sequences; see, for 
instance, \cite[Section~7.8]{Bra15} 
\cite[Chapter~3]{Fi06}.
\begin{lemma} \label{lem:interlace-rec} 
Let $(p_0(x), p_1(x),\dots,p_m(x))$ be an 
interlacing sequence of real-rooted polynomials 
with positive leading coefficients.
\begin{itemize}
\itemsep=0pt
\item[(a)]
Every nonnegative linear combination $p(x)$ of 
$p_0(x), p_1(x),\dots,p_m(x)$ is real-rooted. 
Moreover, $p_0(x) \preceq p(x) \preceq p_m(x)$. 

\item[(b)]
The sequence $(q_0(x), q_1(x),\dots,q_{m+1}(x))$ 
of partial sums
\[ q_k(x) = \sum_{i=k}^m p_i(x) \]
for $k \in \{0, 1,\dots,m+1\}$ is also interlacing.

\item[(c)]
The sequence $(t_0(x), t_1(x),\dots,t_{m+1}(x))$ 
defined by
\[ t_k(x) = x \sum_{i=0}^{k-1} p_i(x) + 
             \sum_{i=k}^m p_i(x) \]
for $k \in \{0, 1,\dots,m+1\}$ is also interlacing.

\end{itemize}
\end{lemma}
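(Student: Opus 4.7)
The plan is to prove part (a) first; parts (b) and (c) then follow by elementary manipulations combined with iterated applications of (a). The cornerstone throughout is the two-polynomial case (an Obreschkoff-style result): if $p \preceq q$ have positive leading coefficients and $a, b \geq 0$, then $ap + bq$ is real-rooted with $p \preceq ap + bq \preceq q$. I would prove this by evaluating $ap + bq$ at the roots of $q$, where it agrees with $ap$; the hypothesis $p \preceq q$ forces $p$ to take alternating signs at the roots of $q$, so the intermediate value theorem yields a root of $ap + bq$ between each consecutive pair of roots of $q$, and a leading-coefficient comparison at $\pm\infty$ accounts for the remaining real root and verifies the claimed interlacings. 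Essentially the same sign-change argument establishes the ``common interlacer'' principle: if $q_1, q_2$ both interlace a polynomial $p$ (resp.\ are interlaced by $p$), then so does (resp.\ is) every nonnegative combination $aq_1 + bq_2$.

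Granting these, part (a) follows quickly: for $p = \sum c_i p_i$ with $c_i \geq 0$, the hypothesis that the sequence is interlacing gives $p_0 \preceq p_i$ and $p_i \preceq p_m$ for each $i$, and two applications of the common interlacer principle (once for a common lower interlacer, once for a common upper interlacer) yield $p_0 \preceq p \preceq p_m$; in particular $p$ is real-rooted.

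For part (b), it suffices to show $q_k \preceq q_l$ for $k < l$. Given $j \leq l$, an application of (a) to the interlacing sequence $(p_j, p_l, p_{l+1}, \dots, p_m)$ (with coefficient $0$ on $p_j$) gives $p_j \preceq q_l$. Hence the sequence $(p_k, p_{k+1}, \dots, p_{l-1}, q_l)$ is itself interlacing, and another application of (a) yields $q_k - q_l = \sum_{i=k}^{l-1} p_i \preceq q_l$. The two-polynomial base case, applied to $q_k = (q_k - q_l) + q_l$, then gives $q_k \preceq q_l$.

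Part (c) proceeds by a parallel strategy, but requires additional care because the passage from $t_k$ to $t_{k+1}$ multiplies the summand $p_k$ by $x$ and raises its degree. In the setting most relevant to the paper, where the polynomials have nonnegative coefficients and hence real roots in $(-\infty, 0]$, the auxiliary interlacings $p_i \preceq x p_i$ and $x p_i \preceq x p_j$ (the latter whenever $p_i \preceq p_j$) hold and are directly verifiable from the root-based definition of $\preceq$; with these in hand, an argument analogous to that of (b) can be carried out with $p_i$ replaced by $x p_i$ at each position below the cut index $k$. The main technical obstacle throughout is the careful bookkeeping of degrees and leading coefficients, especially in (c) where some summands are multiplied by $x$ and others are not, and at the base case where the precise interlacing configuration depends on whether $\deg p = \deg q$ or $\deg q = \deg p + 1$; no genuinely new idea beyond the two-polynomial base case is needed.
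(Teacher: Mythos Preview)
The paper does not prove this lemma; it is recorded as a list of well-known properties with references to \cite[Section~7.8]{Bra15} and \cite[Chapter~3]{Fi06}, so there is no paper-proof to compare against.

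Your treatment of (a) and (b) is the standard one and is correct: the two-polynomial Obreschkoff base case together with the common-interlacer principle gives (a), and your derivation of (b) from (a) via the auxiliary interlacing sequence $(p_k,\dots,p_{l-1},q_l)$ and the decomposition $q_k = (q_k - q_l) + q_l$ is clean.

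For (c), two remarks. First, your instinct to restrict to polynomials with nonnegative coefficients is not a convenience but a necessity: part (c) as stated in the paper, assuming only positive leading coefficients, is actually false. Take $m=1$, $p_0 = x-3$, $p_1 = x-5$; then $p_0 \preceq p_1$, but $t_0 = 2x-8$ has its root at $4$ while the larger root of $t_1 = xp_0 + p_1 = x^2 - 2x - 5$ is $1+\sqrt{6} < 4$, so $t_0 \not\preceq t_1$. The paper only ever applies (c) to descent-enumerating polynomials, all of which have nonnegative coefficients, so none of its results is affected; but you were right to impose the extra hypothesis. Second, even under that hypothesis your sketch is too compressed to verify. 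The literal reading --- form the mixed sequence $(xp_0,\dots,xp_{k-1},p_k,\dots,p_m)$ and rerun the argument of (b) on it --- does not work, because that sequence is not interlacing: the relation $xp_i \preceq p_j$ would require $\deg(xp_i) \le \deg p_j$, which typically fails. Your auxiliary facts $p_i \preceq xp_i$ and $xp_i \preceq xp_j$ are correct and do feature in a valid proof, but assembling them takes more than a direct analogy with (b); one clean route is the matrix criterion for preservation of interlacing sequences in \cite[Section~7.8]{Bra15}.
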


Given a polynomial $p(x) \in \RR[x]$ of degree 
at most $n$, there exist unique symmetric 
polynomials $a(x), b(x) \in \RR[x]$ with centers 
of symmetry $n/2$ and $(n-1)/2$, respectively, 
such that $p(x) = a(x) + xb(x)$. This expression 
is known as the \emph{symmetric decomposition} (or 
\emph{Stapledon decomposition}) of $p(x)$ with 
respect to $n$. Then, $p(x)$ is said to have a 
\emph{nonnegative} (respectively, \emph{unimodal} 
or \emph{real-rooted}) \emph{symmetric 
decomposition} with respect to $n$ if $a(x)$ and 
$b(x)$ have nonnegative coefficients (respectively, 
are unimodal or real-rooted); see \cite{AT21, BS21} 
for more information about these concepts. Every
polynomial which has a nonnegative unimodal 
symmetric decomposition with respect to $n$ is 
unimodal, with a peak at position $\lceil n/2 
\rceil$.

\subsection{Poset combinatorics}
Our notation and terminology generally follows that 
of \cite[Chapter~3]{StaEC1}. Let $P$ be a finite 
graded poset of rank $n$, having a minimum element 
$\hat{0}$ and rank function $\rho: P \to 
\{0, 1,\dots,n\}$, and let $\hat{P}$ be the poset 
obtained from $P$ by adding a maximum element 
$\hat{1}$. Given $T \subseteq [n]$, the 
$T$-rank-selected subposet of $\hat{P}$ is defined 
as 
\[ \hat{P}_T = \{ y \in P : \rho(y) \in T \} \cup 
               \{\hat{0}, \hat{1}\}. \]
We denote by $\alpha_{\hat{P}}(T)$ the number of 
maximal chains of $\hat{P}_T$ and set 
\begin{equation} \label{eq:ab} 
\beta_{\hat{P}}(T) = \sum_{S \subseteq T} 
   (-1)^{|T \sm S|} \alpha_{\hat{P}}(S) 
\end{equation}
for $T \subseteq [n]$. Equivalently, we have 
\begin{equation} \label{eq:ba} 
\alpha_{\hat{P}}(T) = \sum_{S \subseteq T} 
                      \beta_{\hat{P}}(S) 
\end{equation}
for $T \subseteq [n]$. The collections of numbers 
$(\alpha_{\hat{P}}(T))_{T \subseteq [n]}$ and 
$(\beta_{\hat{P}}(T))_{T \subseteq [n]}$ are the 
\emph{flag $f$-vector} and the 
\emph{flag $h$-vector} of $\hat{P}$, respectively.

The \emph{order complex} of a finite poset $Q$ is 
defined as the simplicial complex $\Delta(Q)$ which
consists of all chains in $Q$. The $f$-polynomial 
and the $h$-polynomial of $\Delta(Q)$ are defined by 
Equations~(\ref{eq:f-Delta(Q)-def}) 
and~(\ref{eq:h-Delta(Q)-def}), respectively, when 
$P$ is replaced by $Q$. Since the $h$-polynomial 
is unaffected when maximum or minimum elements are 
removed from $Q$, we have the equivalent expressions
\begin{equation}
\label{eq:fP_T}
f(\Delta(\hat{P}_T \sm \{\hat{0}, \hat{1}\}), x) = 
\sum_{S \subseteq T} \alpha_{\hat{P}}(S) x^{|S|} = 
\sum_{S \subseteq T} \beta_{\hat{P}}(S) x^{|S|} 
(1+x)^{|T \sm S|} 
\end{equation}
and
\begin{equation}
\label{eq:hP_T}
h(\Delta(\hat{P}_T), x) = \sum_{S \subseteq T} 
  \alpha_{\hat{P}}(S) x^{|S|} (1-x)^{|T \sm S|} = 
	\sum_{S \subseteq T} \beta_{\hat{P}}(S) x^{|S|}
\end{equation}
for every $T \subseteq [n]$, where the second 
equality in each case is a consequence of 
Equation~(\ref{eq:ba}).

\begin{example} \label{ex:boolean} \rm
Let $\hat{P}$ be the Boolean lattice $B_n$ of 
subsets of $[n]$, partially ordered by inclusion.
Then, $\beta_{\hat{P}}(S)$ is equal to the number 
of permutations $w \in \fS_n$ with $\Des(w) = S$ 
for every $S \subseteq [n-1]$ 
\cite[Corollary~3.13.2]{StaEC1} and 
Equation~(\ref{eq:hP_T}) yields that 
\[ h(\Delta((B_n)_T),x) = 
   \sum_{w \in \fS_n : \, \Des(w) \subseteq T} 
	 x^{\des(w)} = A^T_n(x). \]
We note that, by definition of $A^T_n(x)$ and a 
standard argument, we have $A^T_n(x) = 
A^{n-T}_n(x)$ for every $T \subseteq [n-1]$, 
where $n-T := \{ n - a : a \in T\}$. 
\qed
\end{example}

The \emph{zeta polynomial} $\zZ(P,x)$ is another 
important enumerative invariant of a finite poset
$P$ \cite[Section~3.12]{StaEC1}. For the ease of 
notation, we define it here by letting $\zZ(P,k)$
be the number of multichains $p_1 \le p_2 \le \cdots
\le p_k$ of length $k-1$ (rather than $k-2$) of 
elements of $P$, where $\zZ(P,0) := 1$. A 
comparison of \cite[Proposition~3.12.1~(a)]{StaEC1}
with \cite[Theorem~II.1.4]{StaCCA} then shows that
\begin{equation} \label{eq:zeta}
\sum_{k \ge 0} \zZ(P,k) x^k = \frac{h(\Delta(P),x)}
{(1-x)^n},
\end{equation}
where $n$ is the largest cardinality of a chain in
$P$. The following lemma states that the product 
$P \times Q$ of posets has a real-rooted chain 
polynomial, provided that so do $P$ and $Q$ and 
that $\Delta(P)$ and $\Delta(Q)$ have nonnegative 
$h$-vectors; it complements some of the results of 
\cite[Section~5]{AK23}.
\begin{lemma} \label{lem:product}
Let $P, Q$ be finite posets. If $h(\Delta(P),x)$ 
and $h(\Delta(Q),x)$ have nonnegative coefficients
and only real roots, then so does 
$h(\Delta(P \times Q),x)$.
\end{lemma} 

\begin{proof}
Let $m$ and $n$ be the largest cardinality of a 
chain in $P$ and $Q$, respectively. Clearly, we 
have $\zZ(P \times Q, k) = \zZ(P, k) \zZ(Q, k)$
for every $k$. Hence, by Equation~(\ref{eq:zeta}), 
\[ \sum_{k \ge 0} \zZ(P,k) \zZ(Q,k) x^k = 
   \frac{h(\Delta(P \times Q),x)}{(1-x)^{m+n-1}}
	 \]
and the proof follows by an application of 
\cite[Theorem~0.2]{Wa92}.
\end{proof}

\section{Permutations with restricted descent set}
\label{sec:perms}

This section proves that the polynomials $A^T_n(x)$
are real-rooted, as claimed in part (a) of 
Theorem~\ref{thm:mainA}, and in particular unimodal,
locates their peak and discusses some interesting 
special cases and formulas. The applications of the
real-rootedness of $A^T_n(x)$ discussed here have 
a probabilistic flavor; see \cite[Section~7.2]{Bra15}
\cite{Pi97} for overviews of this topic. For a 
probabilistic approach to the theory of descents 
in permutations, we recommend 
\cite[Section~5]{BDF10}.

Crucial to the proof will be the polynomials
\begin{equation} \label{eq:pnkT-def}
p^T_{n,k}(x) = \sum_{w \in \fS_{n+1,k+1} : \, 
               \Des(w) \subseteq T} x^{\des(w)},
\end{equation}
where $T \subseteq [n]$, $k \in \{0, 1,\dots,n\}$
and $\fS_{n+1,k+1}$ is the set of permutations 
$w \in \fS_{n+1}$ such that $w(1) = k+1$. We note 
that $p^T_{n,0}(x) = A^{T-1}_n(x)$ and 
\[ p^T_{n,n}(x) = \begin{cases}
   x A^{T-1}_n(x), & \text{if $1 \in T$} \\
   0, & \text{if $1 \not\in T$} \end{cases} \]
for $T \subseteq [n]$, where $T - 1 := \{ a-1: a 
\in T \}$ and, as mentioned in 
Section~\ref{sec:intro}, $A^T_n(x) := 
A^{T \cap [n-1]}_n(x)$ for $T \subseteq \NN$. 
We set $p^T_{n,k}(x) = p_{n,k}(x)$ when $T = 
[n]$; these polynomials appeared in \cite{BW08} 
\cite[Section~2.2]{CGSW07} and have 
been studied intensely since then; see, for instance, 
\cite[Section~2]{Ath22+} \cite[Section~3]{BJM19} 
\cite[Example~7.8.8]{Bra15} and the references 
given there. They can also be defined by the 
formula \cite[Equation~(4)]{BW08}
\begin{equation} \label{eq:pnk-gen}
\sum_{m \ge 0} m^k (1+m)^{n-k} x^m = 
   \frac{p_{n,k}(x)}{(1-x)^{n+1}}.
\end{equation}
The polynomials $p_{n,k}(x)$ are real-rooted and 
$(p_{n,k}(x))_{0 \le k \le n}$ is an interlacing 
sequence for every $n \in \NN$; see, for instance,
\cite[Example~7.8.8]{Bra15}. This fact is 
generalized by the main result of this section.

\begin{theorem} \label{thm:perms} 
For all $n \in \NN$ and $T \subseteq [n]$, 
\begin{equation} \label{eq:pnkT-seq}
(p^T_{n,0}(x), p^T_{n,1}(x),\dots,p^T_{n,n}(x))
\end{equation}  
is an interlacing sequence of real-rooted 
polynomials.

In particular, $A^{T-1}_n(x)$ is real-rooted and 
it interlaces $A^T_{n+1}(x)$ for all positive 
integers $n$ and $T \subseteq [n]$.
\end{theorem}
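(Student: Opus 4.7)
The plan is to prove the interlacing of the sequence in (\ref{eq:pnkT-seq}) by induction on $n$, using a recursion that expresses each $p^T_{n,k}(x)$ as a linear combination of the polynomials $p^{T-1}_{n-1,i}(x)$ (interpreted via the convention $T-1 := \{a-1 : a \in T\}$ of the excerpt, suitably intersected with $[n-1]$). The recursion will turn out to match exactly the combinations from Lemma~\ref{lem:interlace-rec}(b) and~(c), applied to the interlacing sequence $(p^{T-1}_{n-1,0}(x),\ldots,p^{T-1}_{n-1,n-1}(x))$ furnished by the inductive hypothesis. The base case $n=0$ is immediate, since then $T$ is necessarily empty and $p^{\emptyset}_{0,0}(x)=1$.

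To derive the recursion, I would start with $w \in \fS_{n+1,k+1}$ and standardize $w(2),\ldots,w(n+1)$ to obtain a permutation $\sigma \in \fS_n$. Then $1 \in \Des(w)$ precisely when $\sigma(1) \le k$, while the descents of $w$ at positions $\ge 2$ correspond bijectively to those of $\sigma$; hence $\Des(w) \subseteq T$ translates to $\Des(\sigma) \subseteq T-1$, together with a constraint on whether position $1$ is a descent of $w$. When $1 \in T$, both possibilities are allowed, and classifying by the value $\sigma(1)$ yields
\[
p^T_{n,k}(x) \;=\; x\sum_{i=0}^{k-1} p^{T-1}_{n-1,i}(x) \,+\, \sum_{i=k}^{n-1} p^{T-1}_{n-1,i}(x),
\]
which is exactly the combination appearing in Lemma~\ref{lem:interlace-rec}(c). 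When $1 \notin T$, only the $\sigma(1) > k$ summand survives, giving
\[
p^T_{n,k}(x) \;=\; \sum_{i=k}^{n-1} p^{T-1}_{n-1,i}(x),
\]
which matches Lemma~\ref{lem:interlace-rec}(b). In either case, the inductive hypothesis together with the relevant part of the lemma shows that $(p^T_{n,0}(x),\ldots,p^T_{n,n}(x))$ is an interlacing sequence of real-rooted polynomials with nonnegative leading coefficients, completing the induction.

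For the last assertion, summing the fibers over $w(1)$ gives $A^T_{n+1}(x) = \sum_{k=0}^n p^T_{n,k}(x)$, a nonnegative linear combination of the interlacing sequence just established; Lemma~\ref{lem:interlace-rec}(a) then yields $p^T_{n,0}(x) \preceq A^T_{n+1}(x)$, and since $p^T_{n,0}(x) = A^{T-1}_n(x)$ (as recorded immediately before the theorem), the claimed interlacing follows. The main obstacle is simply finding the right recursion in the first place: once one hits upon the split into the cases $1 \in T$ versus $1 \notin T$ and carefully tracks the index shift $T \mapsto T-1$ occurring when the first letter of $w$ is removed, the argument reduces to a direct application of Lemma~\ref{lem:interlace-rec}.
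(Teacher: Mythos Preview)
Your proposal is correct and follows essentially the same approach as the paper: induction on $n$, the same two-case recursion (according to whether $1\in T$), and the same applications of Lemma~\ref{lem:interlace-rec}(b),(c) for the inductive step and of part~(a) for the final interlacing $A^{T-1}_n(x)\preceq A^T_{n+1}(x)$. You even supply the standardization argument that the paper leaves as ``straightforward to verify.''
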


\begin{proof}
We proceed by induction on $n$, the result being 
trivial for $n=0$. Suppose that $n \ge 1$ and 
that~(\ref{eq:pnkT-seq}) is an interlacing
sequence of real-rooted polynomials when $n$ is
replaced by $n-1$. It is straightforward to verify 
from the defining equation~(\ref{eq:pnkT-def}) 
that 
\[ p^T_{n,k}(x) = \sum_{i=k}^{n-1} p^{T-1}_{n-1,i}
   (x) \]
for $k \in \{0, 1,\dots,n\}$, if $1 \not\in T$, 
and
\[ p^T_{n,k}(x) = x \sum_{i=0}^{k-1} 
   p^{T-1}_{n-1,i}(x) + 
	 \sum_{i=k}^{n-1} p^{T-1}_{n-1,i}(x) \]
for $k \in \{0, 1,\dots,n\}$, if $1 \in T$. This 
recurrence generalizes that of the special case $T
= [n]$; see \cite[Example~7.8.8]{Bra15}. An 
application of Lemma~\ref{lem:interlace-rec} shows 
that, in either case, (\ref{eq:pnkT-seq}) is an 
interlacing sequence of real-rooted polynomials as 
well. This completes the inductive step. The last 
statement follows from part (a) of 
Lemma~\ref{lem:interlace-rec} since $p^T_{n,0}(x) 
= A^{T-1}_n(x)$ and $\sum_{k=0}^n p^T_{n,k}(x) = 
A^T_{n+1}(x)$.
\end{proof}

We recall that a polynomial $p(x) = \sum_{k \ge 0} 
h_k x^k \in \RR[x]$ with nonnegative and unimodal 
coefficients is said to have a \emph{mode} $m$ if 
there exists a unique $m \in \frac{1}{2} \ZZ$ such 
that either $h_m = \max_k h_k$, or $h_{m \pm 1/2} 
= \max_k h_k$.  
\begin{corollary} \label{cor:mode} 
The polynomial $A^T_n(x)$ is unimodal and log-concave 
for every $T \subseteq [n-1]$. Moreover, $A^T_n(x)$ 
has a mode $m_n(T)$ such that $\lfloor \mu_n(T) 
\rfloor \le m_n(T) \le \lceil \mu_n(T) \rceil$ for
\begin{equation} \label{eq:m_n(T)-def}
\mu_n(T) := r - \sum_{i=1}^r {c_i + c_{i+1} \choose
   c_i}^{-1},
\end{equation}  
where $T = \{a_1, a_2,\dots,a_r\}$ with $1 \le a_1 <
\cdots < a_r < n$ and $c_i = a_i - a_{i-1}$ for $i 
\in [r+1]$, with $a_0 := 0$ and $a_{r+1} := n$. In 
particular,
\begin{equation} \label{eq:A^Tn-half-increasing}
  h_0(T) \le h_1(T) \le \cdots \le 
  h_{\lfloor r/2 \rfloor}(T),
\end{equation}  
if $A^T_n(x) = \sum_{i=0}^r h_i(T) x^i$.
\end{corollary}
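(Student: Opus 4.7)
The plan is to locate the mode of $A^T_n(x)$ via Darroch's classical theorem, which asserts that a real-rooted polynomial with nonnegative coefficients has its mode within unit distance of the mean $\mu = p'(1)/p(1)$; more precisely, the (uniquely defined) mode $m$ satisfies $\lfloor \mu \rfloor \le m \le \lceil \mu \rceil$. The real-rootedness of $A^T_n(x)$, and hence its unimodality and log-concavity (via Newton's inequalities), is immediate from Theorem~\ref{thm:perms}. The substantive task is therefore to identify the quantity $\mu_n(T)$ defined in~(\ref{eq:m_n(T)-def}) with the mean $(A^T_n)'(1)/A^T_n(1)$ of the probability distribution on $\{0, 1, \dots, r\}$ whose weights are proportional to the coefficients of $A^T_n(x)$.

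For this identification, I would first use the bijection between permutations $w \in \fS_n$ with $\Des(w) \subseteq T$ and ordered set partitions of $[n]$ into blocks of sizes $c_1, c_2, \dots, c_{r+1}$ (each block listed in increasing order) to obtain $A^T_n(1) = \binom{n}{c_1, c_2, \dots, c_{r+1}}$. Then, for $w$ drawn uniformly from this set, linearity of expectation gives
\[ \mathbb{E}[\des(w)] \ = \ \sum_{i=1}^r \Pr[a_i \in \Des(w)]. \]
For each fixed $i$, the event $a_i \notin \Des(w)$ amounts to the $c_i + c_{i+1}$ values assigned to blocks $i$ and $i+1$ being split so that block $i$ receives the $c_i$ smallest; conditional on the set of those values, precisely one of the $\binom{c_i + c_{i+1}}{c_i}$ equally likely assignments achieves this. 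Summing over $i$ gives $\mathbb{E}[\des(w)] = r - \sum_{i=1}^r \binom{c_i + c_{i+1}}{c_i}^{-1} = \mu_n(T)$, after which Darroch's theorem supplies the asserted bounds on $m_n(T)$.

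For~(\ref{eq:A^Tn-half-increasing}), I would note that $c_i, c_{i+1} \ge 1$ forces $\binom{c_i + c_{i+1}}{c_i} \ge 2$, so each summand in~(\ref{eq:m_n(T)-def}) is at most $1/2$; hence $\mu_n(T) \ge r/2$, so $m_n(T) \ge \lfloor \mu_n(T) \rfloor \ge \lfloor r/2 \rfloor$, and unimodality then closes the chain of inequalities up through position $\lfloor r/2 \rfloor$. The main obstacle in the argument is the probabilistic computation of $\mu_n(T)$; while classical, it requires some care to verify that the conditional probabilities for adjacent blocks combine as claimed under the uniform distribution over permutations with $\Des(w) \subseteq T$.
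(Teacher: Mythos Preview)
Your proposal is correct and follows essentially the same approach as the paper: invoke Darroch's theorem after establishing real-rootedness from Theorem~\ref{thm:perms}, compute the mean via linearity of expectation using the adjacent-block probability $1 - \binom{c_i+c_{i+1}}{c_i}^{-1}$, and deduce~(\ref{eq:A^Tn-half-increasing}) from $\mu_n(T) \ge r/2$. You have in fact spelled out more detail than the paper does (the conditional argument for the adjacent blocks and the reason $\binom{c_i+c_{i+1}}{c_i} \ge 2$), but the underlying strategy is identical.
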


\begin{proof}
Since $A^T_n(x)$ is real-rooted and has nonnegative 
coefficients, by a result of 
Darroch~\cite[Theorem~4]{Da64} (see also 
\cite[Theorem~2.2]{Bra15} \cite[p.~284]{Pi97}) we 
only need to verify that the right-hand side of 
Equation~(\ref{eq:m_n(T)-def}) is equal to the 
expected number of descents when a permutation 
$w \in \fS_n$ with $\Des(w) \subseteq T$ is selected 
uniformly at random. This holds because the 
probability that $a_i \in \Des(w)$ for such $w$ is 
easily computed to be
$1 - {c_i + c_{i+1} \choose c_i}^{-1}$. The last 
statement follows since $\mu_n(T) \ge r/2$.
\end{proof}

\begin{remark} \label{rem:gessel} \rm
The fact (see Example~\ref{ex:boolean}) that $A^T_n(x) 
= h(\Delta((B_n)_T),x)$, combined with 
Equation~(\ref{eq:hP_T}), yields the explicit formula 
\[ A^T_n(x) = \sum_{S \subseteq T} \alpha_{B_n}(S) 
              \, x^{|S|} (1-x)^{|T \sm S|}, \]
where $\alpha_{B_n}(S)$ is a multinomial coefficient.
We thank Ira Gessel~\cite{Ge23} for pointing out that 
this is equivalent to the determinantal formula $x^r 
A^T_n (1/x) = n! \det 
(\theta_{ij}(x))_{0 \le i, j \le r}$, where
\[ \theta_{ij}(x) = \begin{cases}
   0, & \text{if $i > j+1$} \\
   1, & \text{if $i = j+1$} \\
	 \displaystyle \frac{(1-x)^{j-i}}{(a_{j+1}-a_i)!}, 
	 & \text{if $i \le j$} \end{cases} \]
and $T = \{a_1, a_2,\dots,a_r\}$ is as in 
Corollary~\ref{cor:mode}, and for suggesting a 
direct combinatorial proof.
\qed
\end{remark}

\begin{example} \label{ex:T=[r]} \rm
(a) For $T = [r] \subseteq [n-1]$, the polynomial 
$A^T_n(x)$ is the descent enumerator for 
permutations $w \in \fS_n$ which have ascents in 
the last $n-r-1$ positions. A $q$-analogue of 
$A^T_n(x)$ in this case was studied 
in~\cite{CGSW07} (although the unimodality of 
$A^T_n(x)$ was not addressed there). 
Theorem~\ref{thm:perms} and 
Corollary~\ref{cor:mode} imply that $A^T_n(x)$ is
a real-rooted, hence unimodal, polynomial of 
degree $r$ and that it has a mode $m$ 
such that $\lfloor r/2 \rfloor \le m \le \lceil
(r+1)/2 \rceil$. Since $(B_n)_T$, with its 
maximum element removed, is a simplicial poset 
in this case, the real-rootedness of $A^T_n(x)$ 
already follows from the main result 
of~\cite{BW08}. Setting $q=1$ in the formula of 
\cite[Theorem~2.10]{CGSW07} gives that
\[ \sum_{m \ge 0} \sum_{i=0}^r 
   {n-r+i-1 \choose i} m^i (m+1)^{r-i} = 
	 \frac{A^T_n(x)}{(1-x)^{r+1}} \]
or equivalently, by Equation~(\ref{eq:pnk-gen}), 
that 
\[ A^T_n(x) = \sum_{i=0}^r {n-r+i-1 \choose i} 
   p_{r,i}(x). \]
In particular, $A^T_n(x)$ is interlaced by the 
Eulerian polynomial $p_{r,0}(x) = A_r(x)$.

(b) More generally, for $T = \{s+1, s+2,\dots,s+r\} 
\subseteq [n-1]$, the polynomial $A^T_n(x)$ is the 
descent enumerator for permutations $w \in \fS_n$ 
which have ascents in the first $s$ and the last 
$n-r-s-1$ positions. According to 
Theorem~\ref{thm:perms} and 
Corollary~\ref{cor:mode}, $A^T_n(x)$ is a 
real-rooted, hence unimodal, polynomial of degree 
$r$ which has a mode $m$ such that $\lfloor 
(r-1)/2 \rfloor \le m \le \lceil (r+1)/2 \rceil$. 
\qed
\end{example}

\begin{example} \label{ex:even-descents} \rm
Let $T = \{2, 4,\dots,2n-2\}$, so that $A^T_{2n}(x)$ 
is the descent enumerator for permutations $w \in 
\fS_{2n}$ which have an ascent in every odd 
position. By Theorem~\ref{thm:perms} and 
Corollary~\ref{cor:mode}, $A^T_{2n}(x)$ is a 
real-rooted, hence unimodal, polynomial of degree 
$n-1$ which has a mode $m_n$ such that $\lfloor 
5(n-1)/6 \rfloor \le m_n \le \lceil 5(n-1)/6 
\rceil$. 

Let us choose a permutation $w \in \fS_{2n}$ with
$\Des(w) \subseteq T = \{2, 4,\dots,2n-2\}$ 
uniformly at random and let $X_n(w) = \des(w)$ for
such $w \in \fS_{2n}$. One may compute the variance 
of the random variable $X_n$ as $\sigma^2_n = (19n-13)
/180$ for $n \ge 2$. As a consequence of 
Corollary~\ref{cor:mode}, $X_n$ has mean $\mu_n = 
5(n-1)/6$. Given the real-rootedness of $A^T_{2n}
(x)$, a theorem of Bender~\cite{Be73} (see also 
\cite[Theorem~2.1]{Bra15} \cite[p.~286]{Pi97}) 
implies that $(X_n - \mu_n) / \sigma_n$ converges 
to the standard normal distribution as 
$n \to \infty$.
\qed
\end{example}

We conclude this section with the following 
question. Part (b) provided a lot of the 
motivation behind this paper; it is an open problem
\cite[Question~7.2]{AT21} to decide whether the
inequalities which appear there hold for the 
$h$-vectors of all $(r-1)$-dimensional doubly 
Cohen--Macaulay simplicial complexes. An 
affirmative answer to
part (a) would imply the (weaker) top-heavy 
inequalities $h_i(T) \le h_{r-i}(T)$ for $0 \le 
i \le \lfloor r/2 \rfloor$; we refer the reader 
to \cite{Sw06} for this implication and for the 
concept of a convex ear decomposition. 
\begin{question} \label{que:convex-ear} 
Let $A^T_n(x) = \sum_{i=0}^r h_i(T) x^i$, where 
$T \subseteq [n-1]$ has size $r$.
\begin{itemize}
\itemsep=0pt
\item[(a)]
Does the order complex of the rank-selected 
subposet $(B_n)_T$ of the Boolean lattice $B_n$ 
(with its minimum and maximum elements removed) 
have a convex ear decomposition? 

\item[(b)]
Do the inequalities
\[ \frac{h_0(T)}{h_r(T)} \le 
   \frac{h_1(T)}{h_{r-1}(T)} \le \cdots \le 
   \frac{h_r(T)}{h_0(T)} \]
hold?

\end{itemize}
\end{question}

\section{Rank-selected subposets of simplicial 
         posets}
\label{sec:simplicial}

This section proves part (b) of 
Theorem~\ref{thm:mainA} and gives an application.
We recall that a finite poset $P$ with a minimum
element $\hat{0}$ is said to be \emph{simplicial}
\cite{Sta91} \cite[Section~III.6]{StaCCA} if the
interval $[\hat{0},y]$ is isomorphic to a Boolean 
lattice for every $y \in P$.  
The enumerative invariant of a graded simplicial 
poset $P$ of rank $n$ which will be essential to
the proof is the \emph{$h$-polynomial} of $P$. 
This was defined by Stanley~\cite{Sta91} as 
\[ h(P, x) = \sum_{i=0}^n f_{i-1} (P) \, x^i 
             (1-x)^{n-i}, \]
where $f_{i-1} (P)$ is the number of elements of 
$P$ of rank $i$. We then have
\begin{equation} \label{eq:h-to-f}
f_{j-1}(P) = \sum_{i=0}^j {n-i \choose j-i} h_i(P)
\end{equation}
for $j \in \{0, 1,\dots,n\}$ and $h(P, x) = 
h(\Delta,x)$, if $P$ is the face poset of an 
$(n-1)$-dimensional simplicial complex $\Delta$. 
Stanley~\cite{Sta91} (see also 
\cite[Section~III.6]{StaCCA}) showed that $h(P,x)$
has nonnegative coefficients for every 
Cohen--Macaulay simplicial poset $P$.

Another essential ingredient for the proof of 
Theorem~\ref{thm:mainA} is the following statement
(an exercise from~\cite{StaCCA}), which expresses 
the flag $h$-vector of a graded simplicial poset 
in terms of its $h$-vector. We provide a proof for 
the convenience of the reader.
\begin{lemma} \label{lem:StaCCA} 
{\rm (\cite[Exercise~III.15]{StaCCA})}
Let $P$ be a graded simplicial poset of rank $n$. 
Then, 
\[ \beta_{\hat{P}} (S) = \sum_{k=0}^n h_k(P) \
   \# \{ w \in \fS_{n+1} : \, w(n+1) = k+1, \Des(w)
	 = [n+1] \sm S \} \]
for every $S \subseteq [n]$. 
\end{lemma}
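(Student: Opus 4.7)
The plan is to double-count $\alpha_{\hat P}(T)$ for every $T \subseteq [n]$ and then invert. On the poset side, since $P$ is simplicial, the interval $[\hat 0, y]$ is Boolean of rank $\rho(y)$ for every $y \in P$, so saturated chains through $y$ with specified ranks factor combinatorially. Writing $T = \{t_1 < t_2 < \cdots < t_m\}$, I would observe that a maximal chain $\hat 0 < y_1 < \cdots < y_m < \hat 1$ of $\hat P_T$ is specified by choosing $y_m$ at rank $t_m$ and then a chain $y_1 < \cdots < y_{m-1} < y_m$ inside the Boolean lattice $[\hat 0, y_m]$, giving
\[ \alpha_{\hat P}(T) \ = \ f_{t_m-1}(P) \binom{t_m}{t_{m-1}} \binom{t_{m-1}}{t_{m-2}} \cdots \binom{t_2}{t_1}. \]
Expanding $f_{t_m-1}(P)$ via (\ref{eq:h-to-f}) then expresses $\alpha_{\hat P}(T)$ as
\[ \sum_{k=0}^n h_k(P) \binom{n-k}{t_m-k} \prod_{j=2}^m \binom{t_j}{t_{j-1}}. \]

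For the permutation side, I would interpret the coefficient $\binom{n-k}{t_m-k}\prod_{j=2}^m \binom{t_j}{t_{j-1}}$ as the number of $w \in \fS_{n+1}$ with $w(n+1) = k+1$ that are strictly decreasing on each block of the partition $B_0 = [1, t_1]$, $B_j = [t_{j-1}+1, t_j]$ for $1 \le j \le m-1$, and $B_m = [t_m+1, n+1]$ of $[n+1]$ induced by $T$. Such a $w$ is determined by choosing the $n-t_m$ values in $B_m$ exceeding $k+1$ (there being $n-k$ such values, since $w(n+1)=k+1$ is forced as the minimum of $B_m$), and then distributing the remaining $t_m$ values among $B_0, \ldots, B_{m-1}$; the first task accounts for $\binom{n-k}{t_m-k}$ and the second for the multinomial $\binom{t_m}{t_1, t_2-t_1, \ldots, t_m-t_{m-1}}$, which telescopes into the stated product. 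These are precisely the permutations whose descent set contains $[n] \sm T$, so refining by the exact set of descents in $[n]$ and using the convention that $n+1 \in \Des(w)$ whenever $w(n+1) > 0$ (automatic here), the same quantity equals
\[ \sum_{S \subseteq T} \# \{ w \in \fS_{n+1} : w(n+1) = k+1, \, \Des(w) = [n+1] \sm S \}. \]

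Combining the two computations gives $\alpha_{\hat P}(T) = \sum_{S \subseteq T} R(S)$, where $R(S)$ is the right-hand side of the claimed identity. Since also $\alpha_{\hat P}(T) = \sum_{S \subseteq T} \beta_{\hat P}(S)$ by (\ref{eq:ba}), Möbius inversion on the Boolean lattice $2^{[n]}$ forces $\beta_{\hat P}(S) = R(S)$. The main obstacle I anticipate is the block-decomposition argument on the permutation side: namely, identifying that the condition ``descents at all positions of $[n] \sm T$'' is exactly what is encoded by the multinomial factorization, and keeping the trailing-zero descent convention consistent so that the refinement over $S \subseteq T$ produces the sum of indicator conditions $\Des(w) = [n+1] \sm S$ with the indicated description.
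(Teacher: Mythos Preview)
Your proposal is correct and follows essentially the same route as the paper: compute $\alpha_{\hat P}(T)$ via the Boolean structure below a top element of rank $t_m$, expand $f_{t_m-1}(P)$ through (\ref{eq:h-to-f}), recognize the resulting coefficient as counting permutations $w\in\fS_{n+1}$ with $w(n+1)=k+1$ and $\Asc(w)\subseteq T$, and then invert via (\ref{eq:ba}). The paper leaves that permutation count ``to the reader,'' whereas you spell out the block argument; note only a small indexing slip in your block definition (as written, $B_0$ and $B_1$ coincide---you want $B_j=[t_j+1,t_{j+1}]$ for $1\le j\le m-1$, or equivalently start at $B_1$), which does not affect the argument.
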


\begin{proof}
Let $\Asc(w) := [n] \sm \Des(w)$ be the set of 
ascents of a permutation $w \in \fS_{n+1}$. We need 
to show that 
\[ \beta_{\hat{P}} (S) = \sum_{k=0}^n h_k(P) \
   \# \{ w \in \fS_{n+1} : \, w(n+1) = k+1, \Asc(w)
	 = S \} \]
for every $S \subseteq [n]$ or, equivalently, that 
\[ \alpha_{\hat{P}} (T) = \sum_{k=0}^n h_k(P) \
   \# \{ w \in \fS_{n+1} : \, w(n+1) = k+1, \Asc(w)
	 \subseteq T \} \]
for every $T \subseteq [n]$. Let us write $T = 
\{a_1, a_2,\dots,a_r\} \subseteq [n]$, with $1 \le 
a_1 < \cdots < a_r \le n$. There are $f_{a_r-1}(P)$
elements of rank $a_r$ in $P$ and ${a_r \choose a_1,
a_2-a_1,\dots,a_r-a_{r-1}}$ chains of elements of 
ranks $a_1, a_2,\dots,a_{r-1}$ in any Boolean 
lattice of rank $a_r$. Given this and 
Equation~(\ref{eq:h-to-f}), we find that
\begin{align*} 
\alpha_{\hat{P}} (T) & = f_{a_r-1}(P) 
   {a_r \choose a_1, a_2-a_1,\dots,a_r-a_{r-1}} \\
	 & = \sum_{k=0}^n {n-k \choose a_r-k} h_k(P) 
	 {a_r \choose a_1, a_2-a_1,\dots,a_r-a_{r-1}}.
\end{align*}
Thus, it suffices to verify that ${n-k \choose a_r-k} 
{a_r \choose a_1, a_2-a_1,\dots,a_r-a_{r-1}}$ is 
equal to the number of permutations $w \in \fS_{n+1}$
such that $w(n+1) = k+1$ and $\Asc(w) \subseteq T$,
a task which can safely be left to the reader.
\end{proof}

\begin{proof}[Proof of Theorem~\ref{thm:mainA}]
Given Theorem~\ref{thm:perms}, we only need 
to show part (b). By Lemma~\ref{lem:StaCCA} we 
have
\begin{align*}
\beta_{\hat{P}} (S) & = \sum_{k=0}^n h_k(P) \
\# \{ w \in \fS_{n+1} : \, w(n+1) = k+1, \Asc(w)
= S \} \\ & = \sum_{k=0}^n h_k(P) \
\# \{ w \in \fS_{n+1} : \, w(1) = k+1, \Des(w)
	 = n+1 - S \}. 
\end{align*}
Therefore, by Equation~(\ref{eq:hP_T}),
\begin{align*}
h(\Delta(\hat{P}_T),x) & = \sum_{S \subseteq T} 
\beta_{\hat{P}} (S) x^{|S|} \\ & =  
\sum_{k=0}^n h_k(P) \sum_{S \subseteq T} 
\# \{ w \in \fS_{n+1} : \, w(1) = k+1, \Des(w)
	 = n+1 - S \} \, x^{|n+1-S|} \\ & = 
\sum_{k=0}^n h_k(P) 
\sum_{w \in \fS_{n+1} : \, w(1) = k+1, \Des(w)
	    \subseteq n+1-T} x^{\des(w)} \\ & =
\sum_{k=0}^n h_k(P) p^{n+1-T}_{n,k}(x)
\end{align*}
and the proof follows from Theorem~\ref{thm:perms},
Lemma~\ref{lem:interlace-rec} (a) and the fact that 
$p^{n+1-T}_{n,0}(x) = A^{n-T}_n(x) = A^T_n(x)$.
\end{proof}

\noindent
\textbf{Colored permutations.} As an application, 
let us generalize part (a) of Theorem~\ref{thm:mainA} 
to $r$-colored permutations. An \emph{$r$-colored 
permutation} of the set $[n]$ is defined as a pair
$w \times \rzz$, where $w = (w(1), w(2),\dots,w(n)) 
\in \fS_n$ and $\rzz = (z_1, z_2,\dots,z_n) \in 
\{0, 1,\dots,r-1\}^n$. The number $z_i$ is thought 
of as the color assigned to $w(i)$. The set of all 
$r$-colored permutations of $[n]$ is denoted by 
$\fS_n[\ZZ_r]$. 

Let $u = w \times \rzz \in \fS_n[\ZZ_r]$ be an 
$r$-colored permutation, as before, and set $w(n+1) 
= n+1$ and $z_{n+1} = 0$. A \emph{descent} of $u$ is 
any index $i \in [n]$ such that either $z_i > z_{i+1}$, 
or $z_i = z_{i+1}$ and $w(i) > w(i+1)$. Thus, $n$ is 
a descent of $u$ if and only if $w(n)$ has nonzero 
color. As usual, we denote by $\Des(u)$ and $\des(u)$ 
the set and the number of descents of $u \in \fS_n
[\ZZ_r]$, respectively. The polynomial 
\begin{equation} \label{eq:AnrT-def}
A^T_{n,r}(x) = \sum_{u \in \fS_n[\ZZ_r] : 
              \, \Des(u) \subseteq T} x^{\des(u)},
\end{equation}
defined for $T \subseteq [n]$, provides a common 
generalization of $A^T_n(x)$ (the special case 
$r=1$) and the $r$-colored Eulerian polynomial 
$A_{n,r}(x)$ (the special case $T=[n]$), introduced
and studied by Steingr\'imsson \cite{Stei92, Stei94}. 
The latter was shown to be real-rooted in 
\cite[Theorem~3.19]{Stei92} 
\cite[Theorem~19]{Stei94}.
\begin{theorem} \label{thm:rperms} 
The polynomial $A^T_{n,r}(x)$ is real-rooted and 
interlaced by $A^T_n(x)$ for all positive integers 
$n, r$ and every $T \subseteq [n]$.
\end{theorem}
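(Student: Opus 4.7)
The plan is to realize $A^T_{n,r}(x)$ as $h(\Delta(\hat{P}_T), x)$ for a specific Cohen--Macaulay simplicial poset $P$ of rank $n$, and then invoke Theorem~\ref{thm:mainA}(b); both the real-rootedness of $A^T_{n,r}(x)$ and its interlacing by $A^T_n(x)$ will follow at once.

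The poset I use is $P = P_n[\ZZ_r]$, whose elements are all pairs $(S, c)$ with $S \subseteq [n]$ and $c : S \to \{0, 1, \dots, r-1\}$, with $(S, c) \le (S', c')$ whenever $S \subseteq S'$ and $c'|_S = c$, and with rank function $\rho(S, c) = |S|$. Each principal order ideal $[\hat{0}, (S, c)]$ is a Boolean lattice on $S$, so $P$ is a graded simplicial poset of rank $n$. The associated simplicial cell complex is the simplicial join of $n$ copies of the $0$-dimensional complex on $r$ isolated points; this join is shellable, hence Cohen--Macaulay, and a direct computation yields $h(P, x) = (1 + (r-1)x)^n$.

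The key combinatorial step is the identity
\[
\alpha_{\hat{P}}(T) \ = \ \# \{ u \in \fS_n[\ZZ_r] : \Des(u) \subseteq T \}
\]
for every $T \subseteq [n]$; by Equation~(\ref{eq:hP_T}) and the definition~(\ref{eq:AnrT-def}) of $A^T_{n,r}(x)$, this implies $h(\Delta(\hat{P}_T), x) = A^T_{n,r}(x)$. Writing $T = \{a_1 < \cdots < a_s\}$ with $a_0 := 0$, any maximal chain $\hat{0} < (S_1, c^{(1)}) < \cdots < (S_s, c^{(s)}) < \hat{1}$ of $\hat{P}_T$ (with $|S_j| = a_j$) determines an ordered set partition of $[n]$ into the blocks $B_j = S_j \sm S_{j-1}$ for $1 \le j \le s$ and the tail block $B_{s+1} = [n] \sm S_s$. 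I construct $u = w \times \rzz$ by placing in positions $a_{j-1}+1, \dots, a_j$ the elements of $B_j$, arranged in (color, value)-lexicographic order with the colors $c^{(s)}|_{B_j}$ for $1 \le j \le s$, and by arranging $B_{s+1}$ (when nonempty) in increasing value order with every color equal to $0$. No descent of $u$ occurs strictly inside any block, so $\Des(u) \subseteq T$, and the construction is easily reversed by reading the blocks off the forced ascents of $u$ at positions in $[n] \sm T$.

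The hard part will be the bijection of the previous paragraph: one must check that the convention $z_{n+1} = 0$ naturally forces the tail block $B_{s+1}$, which is nonempty precisely when $n \notin T$, to carry the trivial coloring and the ascending arrangement, so that no spurious descent arises at position $n$. This is what produces the factor $r^{a_s}$ rather than $r^n$ in the resulting count $\alpha_{\hat{P}}(T) = r^{a_s} \binom{n}{a_1, a_2 - a_1, \dots, n - a_s}$, which matches an elementary evaluation of $\# \{u \in \fS_n[\ZZ_r] : \Des(u) \subseteq T\}$. With the identity $h(\Delta(\hat{P}_T), x) = A^T_{n,r}(x)$ in hand, Theorem~\ref{thm:mainA}(b) yields that $A^T_{n,r}(x)$ is real-rooted and interlaced by $A^T_n(x)$, as claimed.
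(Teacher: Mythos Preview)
Your proof is correct and follows essentially the same approach as the paper: both realize $A^T_{n,r}(x)$ as $h(\Delta(\hat P_T),x)$ for the same Cohen--Macaulay simplicial poset of $r$-colored subsets of $[n]$ and then invoke Theorem~\ref{thm:mainA}(b). The only difference is that the paper cites \cite{Ath14} for the shellability of $P$ and for the interpretation of $\beta_{\hat P}(S)$ as the number of colored permutations with descent set $S$, whereas you supply both facts directly (shellability via the join-of-points description, and the equivalent $\alpha_{\hat P}(T)$ identity via an explicit bijection).
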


\begin{proof}
We will apply Theorem~\ref{thm:mainA} to the poset 
of $r$-colored subsets of the set $[n]$, defined as 
follows. We consider the subsets $\Omega$ of $[n] 
\times \{0, 1,\dots,r-1\}$ for which for every $i 
\in [n]$ there is at most one $j \in \{0, 
1,\dots,r-1\}$ such that $(i, j) \in \Omega$ and 
let $P$ be the set of all 
such subsets, partially ordered by inclusion. Thus, 
$P$ is a graded simplicial poset of rank $n$ which 
is isomorphic to the Boolean lattice $B_n$ for 
$r=1$. It was shown in the proof of 
\cite[Theorem~1.3]{Ath14} that $P$ is shellable, 
hence Cohen--Macaulay, and that $\beta_{\hat{P}}(S)$
is equal to the number of $r$-colored permutations
$u \in \fS_n[\ZZ_r]$ with descent set 
equal to $S$, for every $S \subseteq [n]$. As a 
result, in view of Equation~(\ref{eq:hP_T}),
\[ h(\Delta(\hat{P}_T),x) = \sum_{S \subseteq T} 
   \beta_{\hat{P}} (S) x^{|S|} = 
\sum_{u \in \fS_n[\ZZ_r] : \, \Des(u) \subseteq T} 
x^{\des(u)} = A^T_{n,r}(x) \]
for every $T \subseteq [n]$ and the proof follows 
from Theorem~\ref{thm:mainA}.
\end{proof}

\section{Noncrossing partition lattices}
\label{sec:noncrossing}

This section proves Theorem~\ref{thm:mainB}. 
We first recall the definition of $\NC_W$. Let $W$ 
be a finite Coxeter group with rank 
$r_W$ and set of reflections $\tT$. For $\alpha \in 
W$ we denote by $\ell_\tT(\alpha)$ the smallest $k$ 
such that $\alpha$ can be written as a  product of 
$k$ reflections in $\tT$. We define the partial order 
$\preceq$ on $W$ by letting $\alpha \preceq \beta$ 
if $\ell_\tT(\alpha) + \ell_\tT(\alpha^{-1} \beta) = 
\ell_\tT(\beta)$, in other words if there exists a 
shortest factorization of $\alpha$ into reflections 
which is a prefix of such a shortest factorization 
of $\beta$. Then, $\NC_W$ is defined as the closed 
interval $[e, \gamma]$ in $(W, \preceq)$, where $e 
\in W$ is the identity element and $\gamma$ is any 
Coxeter element of $W$. The noncrossing partition 
poset $\NC_W$ is a rank-symmetric, graded 
lattice with rank function $\ell_\tT$ and rank $r_W$; 
its combinatorial type is independent of the choice
of $\gamma$. A detailed exposition of noncrossing 
partition lattices can be found in 
\cite[Chapter~2]{Arm09}.

An important role in our first proof of 
Theorem~\ref{thm:mainB}~(a) will be played by the 
polynomials
\[ p_{n,k}(x) = \sum_{w \in \fS_{n+1} : \, w(1) = 
                k+1} x^{\des(w)}, \]
where $k \in \{0, 1,\dots,n\}$, discussed in  
Section~\ref{sec:perms}. 
\begin{proof}[First proof of 
Theorem~\ref{thm:mainB}~(a)] Let $r_W = n$ be the 
rank of $W$. We first assume that $W$ is 
irreducible. Then, the zeta polynomial of $\NC_W$
is given by the formula (see, for instance, 
\cite[Theorem~3.5.2]{Arm09})
\begin{align*} 
  \zZ(\NC_W, m) & = \frac{1}{|W|} \prod_{i=1}^n 
                   (mh + d_i) \\ & = 
	 \frac{1}{|W|} \prod_{i=1}^n \left( 
	 (h-d_i)m + d_i(m+1) \right), 
\end{align*} 
where $h$ is the Coxeter number of $W$ and $d_1, 
d_2,\dots,d_n$ are its degrees. Since $d_i \le h$ 
for every $i$, the second expression shows that 
$\zZ(\NC_W, m)$ can be written as a nonnegative 
linear combination of the polynomials $m^k
(1+m)^{n-k}$ for $k \in \{0, 1,\dots,n\}$. 
Moreover, this must be the case for every $W$, 
since $\NC_W$ is isomorphic to the product of
posets $\prod_{i=1}^\ell \NC_{W_i}$, where $W_1, 
W_2,\dots,W_\ell$ are the irreducible components 
of $W$, and hence $\zZ(\NC_W, m) = 
\prod_{i=1}^\ell \zZ(\NC_{W_i}, m)$. 

In view of Equations~(\ref{eq:zeta}) 
and~(\ref{eq:pnk-gen}), we conclude that 
$h(\Delta(\NC_W),x)$ can be written as a 
nonnegative linear combination of the polynomials 
$p_{n,k}(x)$ for $k \in \{0, 1,\dots,n\}$. Since 
$(p_{n,k}(x))_{0 \le k \le n}$ is an interlacing 
sequence, this and Lemma~\ref{lem:interlace-rec}
imply that $h(\Delta(\NC_W),x)$ is real-rooted 
and is interlaced by the Eulerian polynomial 
$A_n(x)$. 
\end{proof}

\begin{remark} \label{rem:theo} \rm
An explicit expression for $\zZ(\NC_W, m)$ as a 
nonnegative linear combination of the polynomials 
$m^k(1+m)^{n-k}$ for $k \in \{0, 1,\dots,n\}$ can
be deduced from results of \cite{BJV19, JV15} (see 
\cite[Section~4.6]{BJV19}). Specifically, assuming 
that $W$ is irreducible of rank $n$, we have 
\[ \zZ(\NC_W, m) = \sum_{k=0}^{n-1} 
   \frac{JV(W;k)}{n!} \cdot m^k (1+m)^{n-k}, \]
where $JV(W;k)$ is equal to the number of shortest
factorizations $\gamma = \tau_1 \tau_2 \cdots \tau_n$
of the Coxeter element $\gamma$ into reflections 
such that there are exactly $k$ indices $i \in [n-1]$
for which $\tau_1 \tau_2 \cdots \tau_i$ is greater 
than $\tau_1 \tau_2 \cdots \tau_{i+1}$ in the Bruhat
order on $W$.
\qed
\end{remark}

The second proof of part (a) and the proof of part 
(b) of Theorem~\ref{thm:mainB} are based on 
explicit combinatorial interpretations of the 
polynomial $h(\Delta(\NC_W),x)$ for the 
irreducible finite Coxeter groups of classical 
types. Before stating them, we need to
introduce some definitions and notation. A 
\emph{descent} of a word $w \in [r]^n$ is any 
index $i \in [n-1]$ such that $w(i) \ge w(i+1)$. 
We denote by $\dD_n$ the set of words $w \in \ZZ^n$ 
such that $(|w(1)|, w(2),\dots,w(n)) \in [n-1]^n$. 
A \emph{descent} of such a word $w \in \dD_n$ is 
defined as any index $i \in [n-1]$ such that 
\begin{itemize}
\itemsep=0pt
\item[$\bullet$]
$|w(i)| > w(i+1)$, or

\item[$\bullet$]
$w(i) = w(i+1) > 0$.
\end{itemize} 
As usual, we denote by $\Des(w)$ and $\des(w)$ the 
set and the number of descents, respectively, of a 
word $w$. 
\begin{proposition} \label{prop:noncrossing} 
Let $W$ be an irreducible finite Coxeter group of 
Coxeter type $\xX$. Then,

\[ h(\Delta(\NC_W),x) = \begin{cases}
    \displaystyle \frac{1}{n} \sum_{w \in [n]^{n-1}} 
		x^{\des(w)}, & \text{if $\xX = A_{n-1}$} \\ & \\
    \displaystyle \sum_{w \in [n]^n} x^{\des(w)}, 
		& \text{if $\xX = B_n$} \\
    & \\ \displaystyle \sum_{w \in \dD_n} x^{\des(w)},
    & \text{if $\xX = D_n$.}  \end{cases} \]
Moreover,
\[  h(\Delta(\NC_W),x) = \begin{cases}
    1 + (m-1)x, & \text{if $\xX = I_2(m)$} \\
    1 + 28x + 21x^2, & \text{if $\xX = H_3$} \\
    1 + 275x + 842x^2 + 232x^3, & \text{if $\xX = H_4$} \\
    1 + 100x + 265x^2 + 66x^3, & \text{if $\xX = F_4$} \\
    1 + 826x + 10778x^2 + 21308x^3 + 8141x^4 + 418x^5, 
		               & \text{if $\xX = E_6$} \\
    1 + 4152x + 110958x^2 + 446776x^3 + 412764x^4 & \\
		\ \ + \ 85800x^5 + 2431x^6, & \text{if $\xX = E_7$} \\
    1 + 25071x + 1295238x^2 + 9523785x^3 + 17304775x^4 & \\
		\ \ +  8733249x^5 + 1069289x^6 + 17342x^7, & 
		\text{if $\xX = E_8$}.  
		\end{cases} \]
\end{proposition}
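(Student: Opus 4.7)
By~(\ref{eq:hP_T}) applied with $T$ equal to the entire rank set $[r_W - 1]$ of the proper part of $\NC_W$, one has
\[ h(\Delta(\NC_W), x) \ = \ \sum_{S \subseteq [r_W-1]} \beta_{\NC_W}(S) \, x^{|S|}, \]
so the proposition reduces to identifying the flag $h$-vector entry $\beta_{\NC_W}(S)$ with (up to a global constant) the number of words of the stated family whose descent set equals $S$. By the inversion relation~(\ref{eq:ba}), this is in turn equivalent to showing that $\alpha_{\NC_W}(T)$ counts such words whose descent set is contained in $T$. The plan is therefore to read off $\alpha_{\NC_W}(T)$ from the known explicit flag $f$-vector formulas for the classical types and match the outcome with the appropriate word enumerator.

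For types $A$ and $B$, I would invoke the product formulas for $\alpha_{\NC_W}(T)$ due to Edelman (building on Kreweras) and Reiner, respectively. Writing $T = \{a_1 < \cdots < a_k\}$ with the boundary conventions $a_0 = 0$ and $a_{k+1} = n-1$ (type $A$) or $a_{k+1} = n$ (type $B$), both formulas take the form
\[ \alpha_{\NC_W}(T) \ = \ c \cdot \prod_{j=0}^{k} \binom{n}{a_{j+1} - a_j}, \]
with $c = 1/n$ in type $A$ and $c = 1$ in type $B$. On the word side, a word in $[n]^{n-1}$ (type $A$) or $[n]^n$ (type $B$) has descent set contained in $T$ if and only if it is strictly increasing on each run of positions cut out by $T$; since strictly increasing length-$\ell$ words over $[n]$ are in bijection with $\ell$-subsets of $[n]$, the two expressions agree up to $c$, and the conclusion follows via~(\ref{eq:ba}).

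For type $D$, the same strategy applies but the execution is substantially more intricate, as the paper itself warns. I would use the Athanasiadis--Reiner flag $f$-vector formula for $\NC_{D_n}$, which typically expresses $\alpha_{\NC_{D_n}}(T)$ as a sum of two binomial-product terms reflecting a $\pm$ symmetry inherent to type $D$ noncrossing partitions. The main obstacle will be to match this two-term formula with the count $\#\{w \in \dD_n : \Des(w) \subseteq T\}$ under the nonstandard descent rule at position~$1$ (which treats $|w(1)| > w(2)$ and $w(1) = w(2) > 0$ as descents, but not $w(1) = w(2) < 0$). My idea is to split $\dD_n$ according to the sign of $w(1)$ and, in the spirit of the type $B$ argument but over the alphabet $[n-1]$, interpret each piece as an enumeration of strictly increasing runs; the two resulting subtotals would then be matched term by term with the two summands of the Athanasiadis--Reiner formula. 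The exceptional cases $I_2(m), H_3, H_4, F_4, E_6, E_7, E_8$ are handled by direct computer calculation of the flag $f$-vector followed by inclusion--exclusion, which is feasible given the modest sizes of these noncrossing partition lattices.
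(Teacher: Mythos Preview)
Your plan for types $A$ and $B$ and for the exceptional types is correct and coincides with the paper's argument: one rewrites the known product formula for $\alpha_{\NC_W}(T)$ as a count of words with $\Des(w)\subseteq T$, inverts, and reads off $h(\Delta(\NC_W),x)$ from~(\ref{eq:hP_T}).

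For type $D$, however, your proposed route has a genuine gap. You aim to establish
\[
\alpha_{\NC_{D_n}}(T)\;=\;\#\{\,w\in\dD_n:\Des(w)\subseteq T\,\}\qquad\text{for every }T\subseteq[n-1],
\]
and then invert. But this identity is \emph{false}: the paper explicitly warns that in type $D$ ``it is not true any more that $\beta_P(S)$ is equal to the number of words $w\in\dD_n$ with descent set equal to $S$'', and by~(\ref{eq:ba}) this is equivalent to the failure of the displayed $\alpha$-identity. Concretely, for $n=4$ and $T=\{1\}$ the Athanasiadis--Reiner formula gives $\alpha_{\NC_{D_4}}(\{1\})=2\binom{3}{1}\binom{3}{3}+\binom{3}{1}\binom{2}{1}=12$, whereas only $6$ words $w\in\dD_4$ satisfy $\Des(w)\subseteq\{1\}$ (one must have $(w(2),w(3),w(4))=(1,2,3)$, leaving the $6$ choices $w(1)\in\{\pm1,\pm2,\pm3\}$). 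Your proposed sign-of-$w(1)$ split therefore cannot be matched term by term with the two summands of the AR formula. What \emph{is} true is that the two sides agree after summing over all $T$ of a given cardinality (e.g., $12+24+12=48=6+27+15$ for $|T|=1$, $n=4$).

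This is precisely why the paper proceeds differently in type $D$: it abandons the rank-set--refined count and works instead with $f_{k-1}(\Delta(\overline{\NC}_{D_n}))=\sum_{|T|=k}\alpha_P(T)$, manipulates the AR formula at that coarser level to obtain the closed form
\[
h(\Delta(\NC_{D_n}),x)\;=\;2\sum_{w\in[n-1]^n}x^{\des(w)}\;+\;(1-x)\sum_{w\in[n-1]^{n-1}}x^{\des(w)},
\]
and only then matches this polynomial with $\sum_{w\in\dD_n}x^{\des(w)}$ by splitting according to whether $|w(1)|\ne w(2)$ or $|w(1)|=w(2)$ (not according to the sign of $w(1)$). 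To repair your argument you would need to replace the set-by-set matching with this polynomial-level computation.
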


\begin{proof}
Let us write $P = \NC_W$ and first consider the 
case $\xX = A_{n-1}$. The explicit formula of 
\cite[Theorem~3.2]{Ed80} (see also 
\cite[p.~196]{Rei97}) for the entries of the flag 
$f$-vector of $P$ can be rewritten as 
\[ \alpha_P(T) = \frac{1}{n} \, \# \{ w \in 
   [n]^{n-1} : \Des(w) \subseteq T \} \]
for $T \subseteq [n-2]$. From Equation~(\ref{eq:ba})
it readily follows that 
\[ \beta_P(S) = \frac{1}{n} \, \# \{ w \in 
   [n]^{n-1} : \Des(w) = S \} \]
for $S \subseteq [n-2]$ and hence that 
\[ h(\Delta(P),x) = \sum_{S \subseteq [n-2]} 
   \beta_P (S) x^{|S|} = \frac{1}{n} 
	 \sum_{w \in [n]^{n-1}} x^{\des(w)}. \]
One can reach the same conclusion by using the 
combinatorial interpretation of $\beta_P(S)$ in 
terms of parking functions, given in 
\cite[Proposition~3.2]{Sta97}. The proof of the 
formula for $\xX = B_n$ is entirely similar, 
once one rewrites the formula of 
\cite[Proposition~7]{Sta97} for the flag 
$f$-vector of $P$ as 
\[ \alpha_P(T) = \# \{ w \in [n]^n : \Des(w) 
   \subseteq T \} \]
for $T \subseteq [n-1]$. 

Let us now consider the case $\xX = D_n$, which 
is more involved: it is not true any more that
$\beta_P(S)$ is equal to the number of words $w 
\in \dD_n$ with descent set equal to $S$. Let us
write $\bar{P} = P \sm \{\hat{0}, \hat{1}\}$.
The formula of \cite[Theorem~1.2]{AR04} for the
flag $f$-vector of $P$ shows that
\begin{align*} f_{k-1}(\Delta(\bar{P})) & = 2 
   \sum_{(a_1, a_2,\dots,a_{k+1}) \vDash n} 
	 {n-1 \choose a_1} {n-1 \choose a_2} \cdots 
	 {n-1 \choose a_{k+1}} \, + \\ & 
	 \sum_{(a_1, a_2,\dots,a_{k+1}) \vDash n} 
	 \sum_{i=1}^{k+1}
	 {n-1 \choose a_1} \cdots {n-2 \choose a_i - 2} 
	 \cdots {n-1 \choose a_{k+1}}, \end{align*}
where the first two sums run through all 
compositions $(a_1, a_2\dots,a_{k+1})$ of $n$ 
with $k+1$ parts. Using the fact that 
${n-2 \choose a_i - 2} = \frac{a_i - 1}{n-1} 
{n-1 \choose a_i - 1}$, changing the order of 
summation in the double sum and replacing $a_i$ 
with $a_i + 1$ yields that 

\begin{align*} f_{k-1}(\Delta(\bar{P})) & = 2 
   \sum_{(a_1, a_2\dots,a_{k+1}) \vDash n} 
	 {n-1 \choose a_1} {n-1 \choose a_2} \cdots 
	 {n-1 \choose a_{k+1}} \, + \\ & \sum_{i=1}^{k+1}
	 \sum_{(a_1, a_2,\dots,a_{k+1}) \vDash n-1} 
	 \frac{a_i}{n-1}
	 {n-1 \choose a_1} \cdots {n-1 \choose a_2} 
	 \cdots {n-1 \choose a_{k+1}}. \end{align*}
Changing again the order of summation in the 
double sum, since $\sum_{i=1}^{k+1} a_i = n-1$, 
we find that
	
\begin{align*} f_{k-1}(\Delta(\bar{P})) & = 2
	 \sum_{(a_1, a_2,\dots,a_{k+1}) \vDash n} 
	 {n-1 \choose a_1} {n-1 \choose a_2} \cdots 
	 {n-1 \choose a_{k+1}} \, + \\ & 
	 \sum_{(a_1, a_2,\dots,a_{k+1}) \vDash n-1} 
	 {n-1 \choose a_1} \cdots {n-1 \choose a_2} 
	 \cdots {n-1 \choose a_{k+1}}. \end{align*}
We may rewrite this formula as  

\begin{align*} f_{k-1}(\Delta(\bar{P})) & = 2
	 \sum_{T \subseteq [n-1], \, |T|=k} 
	 \# \{ w \in [n-1]^n : \Des(w) \subseteq T\} 
	 \, + \\ & 
	 \sum_{T \subseteq [n-2], \, |T|=k} 
	 \# \{ w \in [n-1]^{n-1} : \Des(w) \subseteq T\}, 
\end{align*}
whence

\begin{align*} h(\Delta(P),x) & = h(\Delta(\bar{P}),x) 
  = \sum_{k=0}^{n-1} f_{k-1}(\Delta(\bar{P})) \, 
	x^k (1-x)^{n-1-k} \\ & = 2 \, \sum_{k=0}^{n-1}
	\sum_{T \subseteq [n-1], \, |T|=k} 
	\# \{ w \in [n-1]^n : \Des(w) \subseteq T\} x^k 
	(1-x)^{n-1-k} \\ & + \sum_{k=0}^{n-1}
	\sum_{T \subseteq [n-2], \, |T|=k} 
	\# \{ w \in [n-1]^{n-1} : \Des(w) \subseteq T\} 
	x^k (1-x)^{n-1-k} \\ & = 
	2 \sum_{T \subseteq [n-1]} \# \{ w \in [n-1]^n : 
	\Des(w) \subseteq T\} x^{|T|} (1-x)^{n-1-|T|}
	\\ & + \sum_{T \subseteq [n-2]} 
	\# \{ w \in [n-1]^{n-1} : \Des(w) \subseteq T\} 
	x^{|T|} (1-x)^{n-1-|T|}.
\end{align*}
Setting $\Des(w) = S$ in each sum, summing over 
all $S \subseteq T$ and changing the order of 
summation yields that

\begin{align*} h(\Delta(P),x) & = 
	2 \sum_{S \subseteq [n-1]} \# \{ w \in [n-1]^n : 
	\Des(w)=S \} \sum_{S \subseteq T \subseteq [n-1]} 
	x^{|T|} (1-x)^{n-1-|T|} \\ & + 
	\sum_{S \subseteq [n-2]} \# \{ w \in [n-1]^{n-1} : 
	\Des(w)=S \} \sum_{S \subseteq T \subseteq [n-2]} 
	x^{|T|} (1-x)^{n-1-|T|}
\end{align*}
and hence that 

\begin{align} h(\Delta(P),x) & = 
	2 \sum_{S \subseteq [n-1]} \# \{ w \in [n-1]^n : 
	\Des(w)=S \} x^{|S|} \nonumber \\ & + 
	(1-x) \sum_{S \subseteq [n-2]} \# \{ w \in 
	[n-1]^{n-1} : \Des(w)=S \} x^{|S|} \nonumber \\
	& = 2 \sum_{w \in [n-1]^n} x^{\des(w)} + (1-x) 
	\sum_{w \in [n-1]^{n-1}} x^{\des(w)}. 
	\label{eq:Dcomputation} 
\end{align}
Considering the cases $|w(1)| \ne w(2)$ and $|w(1)| 
= w(2)$ for a word $w \in \dD_n$ shows that the 
number of words $w \in \dD_n$ with $\des(w)=k$ is 
equal to the coefficient of $x^k$ in the 
expression~(\ref{eq:Dcomputation}) and the proof 
follows.

The exceptional types are handled via
straightforward computations via Sage~\cite{Sage}.
\end{proof}

\begin{proof}[Second proof of 
Theorem~\ref{thm:mainB}~(a)]
We recall that $\NC_W$ is isomorphic to the product 
of posets $\prod_{i=1}^\ell \NC_{W_i}$, where $W_1, 
W_2,\dots,W_\ell$ are the irreducible components 
of $W$. This fact and Lemma~\ref{lem:product} show
that we may assume that $W$ is irreducible.

Let us first consider the case of groups of type $D$.
By Proposition~\ref{prop:noncrossing}, it suffices 
to show that $h_n(x) := \sum_{w \in \dD_n} 
x^{\des(w)}$ is real-rooted for every $n \ge 2$. 
For $k \ge 2$, we denote by $\dD_{n,k}$ the set of 
words $w \in \ZZ^k$ such that $(|w(1)|, 
w(2),\dots,w(k)) \in [n-1]^k$ and note that $\dD_{n,n} 
= \dD_n$. We define the notion of descent for words 
$w \in \dD_{n,k}$ just as in the special case $k=n$ 
and set 
\[ h_{n,k,j}(x) = \sum_{w \in \dD_{n,k} : \, 
   w(k) = j} x^{\des(w)} \]
for $j \in [n-1]$. We will prove that $(h_{n,k,n-1}
(x),\dots,h_{n,k,2}(x), h_{n,k,1}(x))$ is an 
interlacing sequence of real-rooted polynomials for 
all $n,k \ge 2$ by induction on $k$. This holds for 
$k=2$ since then $h_{n,k,j}(x) = (2j-1) + (2n-2j-1)x$ 
for every $j \in [n-1]$. The inductive step follows 
by an application of part (c) of 
Lemma~\ref{lem:interlace-rec}, since 
\[ h_{n,k+1,j}(x) = \sum_{i=1}^{j-1} h_{n,k,i}(x) \, 
   + \, x \sum_{i=j}^{n-1} h_{n,k,i}(x) \]
for $j \in [n-1]$. In particular, $h_{n,n+1,1}(x) = 
xh_n(x)$ is real-rooted for every $n \ge 2$ and 
hence so is $h_n(x)$. 

A similar (and even simpler) 
argument shows that $\sum_{w \in [r]^n} x^{\des(w)}$
is real-rooted for all $n,r \ge 1$. This covers the 
cases of groups of types $A$ and $B$.
The exceptional groups can be treated with a case by 
case verification. 
\end{proof}

\noindent
\textbf{Symmetric decompositions.}
The second part of Theorem~\ref{thm:mainB} will 
be proven by an application a result of 
Jochemko~\cite{Jo21}, after the expressions of 
Proposition~\ref{prop:noncrossing} for
$h(\Delta(\NC_W),x)$ are suitably rewritten.
For a polynomial or formal power series $H(x) = 
\sum_{n \ge 0} h_n x^n \in \CC[[x]]$ we use the 
notation $\sS_r (H(x)) = \sum_{n \ge 0} h_{rn}x^n$.

\begin{lemma} \label{lem:word-enu} 
Let $E_{n,r}(x) = \sum_{w \in [r]^n} x^{\des(w)}$. 
Then, 
\[ x^n E_{n,r}(1/x) = \sS_r \left( x (1 + x + x^2 + 
   \cdots + x^{r-1})^{n+1} \right) \]
for all $n,r \ge 1$. 
\end{lemma}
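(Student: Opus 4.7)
The plan is to prove the identity by matching coefficients and exhibiting an explicit bijection. Extracting the coefficient of $x^k$ on each side, the left-hand side gives $\#\{w \in [r]^n : \des(w) = n - k\}$, while by the definition of $\sS_r$ the right-hand side gives the coefficient of $x^{rk - 1}$ in $(1 + x + \cdots + x^{r-1})^{n+1}$, which is the number of sequences $(a_0, a_1, \dots, a_n) \in \{0, 1, \dots, r-1\}^{n+1}$ with $a_0 + a_1 + \cdots + a_n = rk - 1$. Thus the lemma reduces to establishing, for each $k \in [n]$, a bijection between these two families.

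To construct such a bijection, I would define $\phi \colon [r]^n \to \{0, 1, \dots, r-1\}^{n+1}$ by $\phi(w) = (a_0, a_1, \dots, a_n)$ with $a_0 = r - w(1)$, $a_n = w(n) - 1$, and
\[ a_i \ = \ w(i) - w(i+1) + r \cdot \mathbf{1}\{w(i) < w(i+1)\} \]
for $1 \le i \le n-1$. A short case analysis on ascent versus descent positions confirms $a_i \in \{0, 1, \dots, r-1\}$ for every $i$. Telescoping the differences $w(i) - w(i+1)$ and separately collecting the $r$ added at each of the $n - 1 - \des(w)$ ascent positions gives
\[ \sum_{i=0}^n a_i \ = \ (r - w(1)) + (w(n) - 1) + (w(1) - w(n)) + r \bigl( n - 1 - \des(w) \bigr) \ = \ r(n - \des(w)) - 1, \]
so $\phi$ sends a word with $\des(w) = n - k$ to a sequence with sum $rk - 1$, matching the two coefficient formulas.

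Bijectivity is verified by inverting $\phi$ iteratively: set $w(1) = r - a_0$, and given $w(i)$ take $w(i+1) = w(i) - a_i$ when $w(i) > a_i$ and $w(i+1) = w(i) - a_i + r$ when $w(i) \le a_i$; the requirement $w(i+1) \in [r]$ forces exactly one of these two options at every step. This already recovers $w \in [r]^n$ from $(a_0, \dots, a_{n-1})$ alone, and consistency with the last coordinate $a_n = w(n) - 1$ is automatic: the forward sum computation forces $a_n - (w(n) - 1)$ to be a multiple of $r$ lying in $\{-(r-1), \dots, r-1\}$, hence zero. The main obstacle is identifying the correct recipe for $\phi$, with the right wrapping at ascent positions and the appropriate boundary terms; once that is in hand, every remaining verification is short and mechanical.
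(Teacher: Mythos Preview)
Your proof is correct. The map $\phi$ is well defined, the telescoping sum computation is accurate (with the convention that descents of $w\in[r]^n$ are the indices $i\in[n-1]$ with $w(i)\ge w(i+1)$, so there are $n-1-\des(w)$ strict ascents), and the consistency argument for $a_n$ is valid since $a_n-(w(n)-1)$ is simultaneously a multiple of $r$ and in $\{-(r-1),\dots,r-1\}$. You implicitly restrict to $k\in[n]$, but the coefficients for $k=0$ and $k>n$ vanish on both sides for trivial reasons, so nothing is missing.

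Your route is genuinely different from the paper's. The paper does not give a bijection; instead it introduces an auxiliary polynomial $\tilde E_{n,r}(x)$ enumerating words $w:\{0,\dots,n\}\to[r]$ with $w(0)=1$ by strict ascents, proves the relation $x^nE_{n,r}(1/x)=\tilde E_{n,r}(x)+(x-1)\tilde E_{n-1,r}(x)$ by a first-letter case analysis, invokes the known identity $\tilde E_{n,r}(x)=\sS_r\bigl((1+x+\cdots+x^{r-1})^{n+1}\bigr)$ coming from the generating function $\sum_{m\ge0}\binom{n+rm}{n}x^m$, and finishes with an algebraic manipulation inside $\sS_r$. Your argument is more self-contained and elementary: it avoids the auxiliary $\tilde E$, the balls-in-boxes generating function, and any appeal to prior identities, and it makes the equality visible at the level of individual words. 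The paper's approach, on the other hand, situates the lemma within the Veronese/Ehrhart framework that is used immediately afterwards (Theorem~\ref{thm:Jo}), so it dovetails naturally with the rest of Section~\ref{sec:noncrossing}.
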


\begin{proof}
First we relate the polynomials $E_{n,r}(x)$ to
the
\[ \tilde{E}_{n,r}(x) := \sum_{w \in \wW_{n,r}} 
   x^{\asc^*(w)}, \]
where $\wW_{n,r}$ is the set of words $w: \{0, 
1,\dots,n\} \to [r]$ with $w(0) = 1$ and $\asc^*(w)$
is the number of indices $i \in [n]$ such that 
$w(i-1) < w(i)$. We note that 
\[ x^{n-1} E_{n,r}(1/x) = \sum_{w \in [r]^n} 
   x^{n-1-\des(w)} = \sum_{w \in [r]^n} 
	 x^{\asc(w)}, \]
where $\asc(w) = n-1-\des(w)$ is the number of 
strict ascents of $w \in [r]^n$. Distinguishing the
cases $w(1) = 1$ and $w(1) \ge 2$ for such a word 
and for a word $w \in \wW_{n,r}$ we get 

\begin{align*}
x^{n-1} E_{n,r}(1/x) & = \tilde{E}_{n-1,r}(x) + 
  \sum_{w \in [r]^n : \, w(1) \ge 2} 
x^{\asc(w)}, \\ \tilde{E}_{n,r}(x) & = 
  \tilde{E}_{n-1,r}(x) + x
	\sum_{w \in [r]^n : \, w(1) \ge 2} x^{\asc(w)}.
\end{align*}
These equalities imply that 
\begin{equation} \label{eq:Erelation}
x^n E_{n,r}(1/x) = \tilde{E}_{n,r}(x) + (x-1)
  \tilde{E}_{n-1,r}(x).
\end{equation}

We now recall that
\begin{equation} \label{eq:Eold}
\tilde{E}_{n,r}(x) = \sS_r \left( (1 + x + x^2 + 
  \cdots + x^{r-1})^{n+1} \right).
\end{equation}
This formula follows from the identity
\begin{equation} \label{eq:savage}
\sum_{m \ge 0} {n+rm \choose n} x^m =
\frac{\tilde{E}_{n,r}(x)} {(1-x)^{n+1}},
\end{equation}
which can be proved by a standard `placing balls 
into boxes' argument  (see \cite[Corollary~8]{SS12} 
for a $q$-analogue) and the computation

\begin{align*}
\sum_{m \ge 0} {n+rm \choose n} x^m & = \sS_r \left( 
\frac{1}{(1-x)^{n+1}} \right) = \sS_r
\left( \frac{(1 + x + x^2 + \cdots + x^{r-1})^{n+1}}
{(1-x^r)^{n+1}} \right) \\
& = \frac{\sS_r \left( (1 + x + x^2 + \cdots + 
x^{r-1})^{n+1} \right)}{(1-x)^{n+1}}.
\end{align*}
Combining Equations~(\ref{eq:Erelation}) 
and~(\ref{eq:Eold}) we get 

\begin{align*}
x^n E_{n,r}(1/x) & = \sS_r \left( (1 + x + x^2 + 
  \cdots + x^{r-1})^{n+1} \right) 
  + (x-1) \sS_r \left( (1 + x + x^2 + \cdots + 
	x^{r-1})^n \right) \\
& = \sS_r \left( (1 + x + x^2 + \cdots + 
  x^{r-1})^{n+1} + (x^r-1) (1 + x + x^2 + \cdots + 
	x^{r-1})^n \right) \\ & = \sS_r
	\left( x (1 + x + x^2 + \cdots + x^{r-1})^{n+1}  
  \right)
\end{align*}
and the proof follows.
\end{proof}

The following result of Jochemko~\cite{Jo21} will
be applied in the proof of 
Theorem~\ref{thm:mainB}~(b).

\begin{theorem} \label{thm:Jo} 
{\rm (\cite[Theorem~1.1]{Jo21})}
Let $h(x) = h_0 + h_1x + \cdots + h_d x^d$ be a 
polynomial of degree $s \le d$ with nonnegative 
coefficients such that 
\begin{itemize}
\itemsep=0pt
\item[$\bullet$]
$h_0 + h_1 + \cdots + h_i \ge h_d + h_{d-1} + 
\cdots + h_{d-i+1}$, and

\item[$\bullet$]
$h_0 + h_1 + \cdots + h_i \le h_s + h_{s-1} + 
\cdots + h_{s-i}$
\end{itemize} 
for all $i$. Then, $\sS_r \left( h(x) (1 + x + x^2 
+ \cdots + x^{r-1})^{d+1} \right)$ has a nonnegative 
real-rooted symmetric decomposition with respect to 
$d$ whenever $r \ge \max \{s, d+1-s\}$.
\end{theorem}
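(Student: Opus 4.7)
The plan is to prove Theorem~\ref{thm:Jo} in three conceptual steps, exploiting the relationship between the Veronese operator $\sS_r$ and descent enumerators of words that is established in Lemma~\ref{lem:word-enu}. The first step is a reduction to a symmetric decomposition of $h$ itself: the two bullet-point inequalities on the coefficients of $h(x)$ are precisely the conditions under which $h(x)$ admits a nonnegative symmetric decomposition $h(x) = \alpha(x) + x\beta(x)$ with respect to $d$, where $\alpha$ is palindromic with center $d/2$ and $\beta$ with center $(d-1)/2$. Indeed, the relations $h_i = \alpha_i + \beta_{i-1}$ together with the symmetries $\alpha_i = \alpha_{d-i}$ and $\beta_i = \beta_{d-1-i}$ determine $\alpha$ and $\beta$ uniquely, and a standard telescoping identifies the nonnegativity of their coefficients with the two displayed inequalities (the first gives $\beta_i \ge 0$, the second gives $\alpha_i \ge 0$, after a shift by $s$). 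By $\RR$-linearity of $\sS_r$ it then suffices to treat $\alpha(x)$ and $x\beta(x)$ separately.

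The second step is to establish symmetry of the two corresponding Veronese outputs. For a palindromic polynomial $f(x) = x^e f(1/x)$ of degree $e \le d$, I would analyze $g_f(x) := \sS_r\bigl(f(x)(1+x+\cdots+x^{r-1})^{d+1}\bigr)$ by noting that the product $f(x)(1+x+\cdots+x^{r-1})^{d+1}$ is palindromic of degree $e + (r-1)(d+1)$, and then carefully tracking which of its coefficients survive upon extracting the multiples of $r$. An index computation, using the hypothesis $r \ge \max\{s,\, d+1-s\}$ to guarantee that $g_f$ has the expected degree rather than a smaller one, shows that $g_\alpha(x)$ is palindromic of degree $d$ while $g_{x\beta}(x) = x\tilde{g}(x)$ with $\tilde{g}$ palindromic of degree $d-1$. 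This produces the desired symmetric decomposition $g(x) = A(x) + xB(x)$ with $A = g_\alpha$ and $B = \tilde{g}$; nonnegativity of $A$ and $B$ is immediate since $\sS_r$ preserves nonnegativity of coefficients.

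The third step is real-rootedness of $A(x)$ and $B(x)$. Generalizing Lemma~\ref{lem:word-enu}, I would first establish a combinatorial formula for $\sS_r(x^i (1+x+\cdots+x^{r-1})^{d+1})$ as a descent enumerator over the words $w \in [r]^{d}$ with a ``shifted descent'' statistic depending on $i$. Using the monomial expansions of $\alpha$ and $\beta$, this presents $A(x)$ and $B(x)$ as nonnegative linear combinations of such descent polynomials. Following the strategy of Theorem~\ref{thm:perms}, I would construct an interlacing sequence of refined descent polynomials by conditioning on the last letter of $w$, and prove the interlacing inductively via part (c) of Lemma~\ref{lem:interlace-rec}; an application of part (a) of the same lemma then yields real-rootedness of $A(x)$ and of $B(x)$.

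The main obstacle is the second step. The Veronese operator $\sS_r$ does not in general preserve palindromy of its input, so the appearance of a bona fide symmetric decomposition of $g(x)$ with respect to $d$ is delicate and depends crucially on the degree bound $r \ge \max\{s,\, d+1-s\}$. Without this bound, the expected degrees of $A(x)$ and $B(x)$ can collapse, and the natural palindromic candidates may fail to have the correct center of symmetry; verifying the precise index-matching that makes $g_\alpha$ and $\tilde{g}$ come out palindromic of degrees $d$ and $d-1$ respectively is where the hypothesis is genuinely used, rather than in the real-rootedness argument of the third step, which only needs nonnegativity of the combining coefficients.
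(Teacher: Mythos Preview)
The paper does not prove Theorem~\ref{thm:Jo}; it is quoted from Jochemko~\cite{Jo21} and applied as a black box in the proof of Theorem~\ref{thm:mainB}. So there is no proof in the present paper to compare your attempt against.

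Your proposal nevertheless has a genuine gap in the first step. You assert that the two bullet-point inequalities are precisely the conditions under which $h(x)$ itself admits a nonnegative symmetric decomposition $h(x) = \alpha(x) + x\beta(x)$ with respect to $d$. This is false whenever $s < d$. For the decomposition with respect to $d$ one has
\[
\alpha_i = (h_0 + \cdots + h_i) - (h_d + \cdots + h_{d-i+1}), \qquad
\beta_i  = (h_d + \cdots + h_{d-i}) - (h_0 + \cdots + h_i);
\]
the first bullet is exactly $\alpha_i \ge 0$, but the second bullet involves $s$ rather than $d$ and is \emph{not} the condition $\beta_i \ge 0$. Concretely, take $h(x) = 1$ with $d = 1$, $s = 0$: both bullets hold, yet the decomposition with respect to $1$ is $\alpha(x) = 1 + x$, $\beta(x) = -1$. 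More generally, if $h_0 > 0$ and $s < d$ then $h_d = 0$ forces $\alpha_0 + \beta_0 = h_d = 0$, hence $\beta_0 = -h_0 < 0$; so in this regime $h$ \emph{never} has a nonnegative symmetric decomposition with respect to $d$, regardless of the hypotheses.

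Since Step~1 fails, the linearity reduction to the pieces $\alpha$ and $x\beta$ collapses, and Steps~2 and~3, which rely on feeding \emph{nonnegative} palindromic inputs into $\sS_r$, cannot proceed as written. The role of the two hypotheses in Jochemko's argument is different: they ensure nonnegativity of the coefficients when the \emph{output} $\sS_r\bigl(h(x)(1 + x + \cdots + x^{r-1})^{d+1}\bigr)$ is expanded in a suitable basis of interlacing polynomials, rather than nonnegativity of a decomposition of the input $h$.
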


\begin{proof}[Proof of Theorem~\ref{thm:mainB}~(b)]
Using the notation of Lemma~\ref{lem:word-enu}, by
Proposition~\ref{prop:noncrossing} and its proof we 
have

\[ h(\Delta(\NC_W),x) = \begin{cases} \displaystyle
    (1/n) E_{n-1,n}(x), & \text{if $\xX = A_{n-1}$} 
		\\ \displaystyle E_{n,n}(x), & 
		\text{if $\xX = B_n$} \\ \displaystyle
    2 E_{n,n-1}(x) + (1-x) E_{n-1,n-1}(x),
    & \text{if $\xX = D_n$.}  \end{cases} \]
In view of Lemma~\ref{lem:word-enu}, these formulas 
may be rewritten as

\[ x^{r_W} h(\Delta(\NC_W),1/x) = \begin{cases} 
   \displaystyle (1/n) \sS_n \left( x (1 + x + x^2 + 
	 \cdots + x^{n-1})^n \right), 
	 & \text{if $\xX = A_{n-1}$} \\ \displaystyle 
	 \sS_n \left( x (1 + x + x^2 + \cdots + 
	 x^{n-1})^{n+1} \right), & 
	 \text{if $\xX = B_n$} \\ \displaystyle \sS_{n-1}
   \left( (x+x^2) (1 + x + x^2 + \cdots + 
	 x^{n-2})^{n+1} \right),
   & \text{if $\xX = D_n$.}  \end{cases} \]
These expressions and Theorem~\ref{thm:Jo} imply 
in each case that $x^{r_W} h(\Delta(\NC_W),1/x)$ 
has a nonnegative
real-rooted symmetric decomposition with 
respect to $r_W$. Since $h(\Delta(\NC_W),x)$ has
degree $r_W - 1$, it has a nonnegative 
real-rooted symmetric decomposition with respect 
to $r_W - 1$. The exceptional groups are again 
handled by a routine case by case verification.
\end{proof}

We close this section with the analogue of 
Question~\ref{que:convex-ear}.
\begin{question} \label{que:convex-ear2} 
Let $h(\Delta(\NC_W),x) = \sum_{i=0}^r h_i(W) x^i$, 
where $r = r_W - 1$.
\begin{itemize}
\itemsep=0pt
\item[(a)]
Does the order complex $\Delta(\overline{\NC}_W)$ 
of the noncrossing partition lattice $\NC_W$ (with 
its minimum and maximum elements removed) have a 
convex ear decomposition? 

\item[(b)]
Do the inequalities
\[ \frac{h_0(W)}{h_r(W)} \le 
   \frac{h_1(W)}{h_{r-1}(W)} \le \cdots \le 
   \frac{h_r(W)}{h_0(W)} \]
hold?

\end{itemize}
\end{question}

\medskip
\noindent \textbf{Acknowledgments}. Part of the 
motivation behind Theorem~\ref{thm:mainA} was 
developed during the workshop `Interactions 
between Topological Combinatorics and 
Combinatorial Commutative Algebra', held at BIRS 
(Banff, Canada) in April 2023. The first named 
author wishes to thank the organizers Mina Bigdeli, 
Sara Faridi, Satoshi Murai and Adam Van Tuyl for 
the invitation and the participants for useful 
discussions. The authors also wish to thank 
Christian Stump for help with the computation of 
the chain polynomial of the noncrossing partition
lattice of type $E_8$.

\end{document}